\crefname{hypothesis}{Hypothesis}{Hypotheses}
\crefname{fact}{Fact}{Facts}
\title{Mini-Extragradient Methods\thanks{\funding{This work was funded by by National Key Research and Development Program of China under grant 2021YFA1003303 and National Natural Science Foundation of China under grant 12171021.}}}
\author{Xiaozhi Liu\thanks{School of Mathematical Sciences, Beihang University, Beijing 100191, People's Republic of China, 
		\{xzliu, yxia\}@buaa.edu.cn.}
	\and Yong Xia\footnotemark[2]
	\thanks{Corresponding author. }
}
\definecolor{bblue}{rgb}{0.855,0.933,0.98}
\definecolor{rred}{HTML}{DC143C}
\begin{document}

\maketitle

\begin{abstract}
The Extragradient (EG) method stands as a cornerstone algorithm for solving monotone nonlinear equations but faces two important unresolved challenges: (i) how to select stepsizes without relying on the global Lipschitz constant or expensive line-search procedures, and (ii) how to reduce the two full evaluations of the mapping required per iteration to effectively one, without compromising convergence guarantees or computational efficiency. To address the first challenge, we propose the Greedy Mini-Extragradient (Mini-EG) method, which updates only the coordinate associated with the dominant component of the mapping at each extragradient step. This design capitalizes on componentwise Lipschitz constants that are far easier to estimate than the classical global Lipschitz constant. To further lower computational cost, we introduce a Random Mini-EG variant that replaces full mapping evaluations by sampling only a single coordinate per extragradient step. Although this resolves the second challenge from a theoretical standpoint, its practical efficiency remains limited. To bridge this gap, we develop the Watchdog-Max strategy, motivated by the slow decay of dominant component magnitudes. Instead of evaluating the full mapping, Watchdog-Max identifies and tracks only two coordinates at each extragradient step, dramatically reducing per-iteration cost while retaining strong practical performance. We establish convergence guarantees and rate analyses for all proposed methods. In particular, Greedy Mini-EG achieves enhanced convergence rates that surpass the classical guarantees of the vanilla EG method in several standard application settings. Numerical experiments on regularized decentralized logistic regression and compressed sensing show speedups exceeding $13\times$ compared with the classical EG method on both synthetic and real datasets.
\end{abstract}

\begin{keywords}
extragradient method, coordinate descent, monotone nonlinear equations, ergodic convergence
\end{keywords}

\begin{MSCcodes}
90C25, 90C30, 90C47, 90C60, 65K05, 65K15
\end{MSCcodes}

\section{Introduction}

Many fundamental problems in optimization and scientific computing, including min-max problem and variational inequalities, can be cast as monotone nonlinear equations. These formulations underpin a broad range of applications, such as decentralized logistic regression \cite{jian2022family}, generative adversarial networks \cite{goodfellow2014generative}, image processing \cite{figueiredo2008gradient}, and robust learning \cite{wen2014robust}.
However, solving them efficiently remains a significant challenge, especially in regimes where the feature dimension substantially exceeds the number of samples, a scenario that has become increasingly prominent in modern few-sample scenarios \cite{wang2020generalizing}.

One of the most widely studied methods for solving monotone nonlinear equations is the Extragradient (EG) method \cite{korpelevich1976extragradient}, which guarantees convergence under $L$-Lipschitz continuity. It has inspired a broad class of EG-type methods, each enhancing performance through different strategies. For example, EG$+$ \cite{diakonikolas2021efficient} adopts a two-time-scale framework that allows more flexible stepsize selection. The Extra-Anchored-Gradient (EAG) method \cite{yoon2021accelerated} achieves faster convergence by combining an initial and the most recent iterate at each step, achieving the optimal rate under monotonicity and Lipschitz continuity assumptions. Building on both EG$+$ and EAG, the Fast Extragradient (FEG) method \cite{lee2021fast} further integrates two-time-scale and anchoring techniques to further accelerate convergence. Another important variant is the Past Extragradient (PEG) method \cite{popov1980modification}, also known as the Optimistic Gradient method, which reduces computational cost by storing and reusing historical iterates.

Despite their broad applicability, EG-type methods suffer from two fundamental limitations:
(i) Their stepsize rules depend on the global Lipschitz constant $L$, which is often unavailable or expensive to estimate in practice. As a result, practitioners typically resort to manual tuning or computationally intensive line-search procedures \cite{xiao2011non}, undermining efficiency and practicality. 
(ii) Each iteration requires either two full evaluations of the mapping or storage of the entire previous iterate. Both options incur significant computational and memory costs, especially in large-scale settings.

These challenges naturally lead to two fundamental and unresolved questions:
\begin{enumerate}
	\item Can we design a line-search-free EG variant that requires no knowledge of the global Lipschitz constant $L$, yet still enjoys rigorous convergence guarantees?
	\item Can such a variant retain its convergence properties while reducing the per-iteration cost from two full evaluations of the mapping to effectively one, thereby improving overall computational efficiency?
\end{enumerate}
Motivated by these questions, we develop three Mini-Extragradient (Mini-EG) methods: Greedy Mini-EG, Random Mini-EG, and Watchdog-Max.
These methods eliminate the dependence on the global Lipschitz constant $L$ for stepsize selection and updates only a single coordinate at each extragradient step, thereby avoiding full evaluations of the mapping.

Our main contributions are summarized as follows:
\begin{itemize}
	\item These Mini-EG variants determine stepsizes based solely on componentwise Lipschitz constants, which are typically easier to compute and smaller than the global Lipschitz constant $L$.
	\item A key contribution of this work is Watchdog-Max, a novel Mini-EG method inspired by the slow decay of dominant component magnitudes. By incorporating randomized sampling, it requires only two coordinate evaluations per extragradient step, significantly reducing computational cost while maintaining convergence performance comparable to the greedy selection strategy.
	\item We provide rigorous convergence analysis and rate guarantees for all proposed methods. Notably, in several standard application settings, the Greedy Mini-EG method attains enhanced convergence rates that go beyond the classical guarantees established for the vanilla EG method.
	\item We evaluate the proposed methods on two representative tasks: regularized decentralized logistic regression and compressed sensing. On both synthetic and real datasets, the methods achieve speedups exceeding ${13\times}$ relative to the standard EG algorithm.
\end{itemize}

The remainder of the paper is organized as follows. \Cref{sec:preliminary} introduces the problem formulation, basic assumptions, and a brief review of the EG method. In \cref{sec:minieg}, we present three novel Mini-EG methods and establish convergence analysis and rate guarantees. Extensive numerical experiments in \cref{sec:experiment} demonstrate substantial acceleration of the proposed methods over the standard EG algorithm on both regularized decentralized logistic regression and compressed sensing tasks. Finally, \cref{sec:conclusion} concludes the paper and discusses several potential directions for future research.

\noindent \textbf{Notation.}
Matrices are denoted by bold uppercase letters $\boldsymbol{A}$, vectors by bold lowercase letters $\boldsymbol{a}$, and scalars by lowercase letters $a$; $\boldsymbol{A}^T$ is the transpose of $\boldsymbol{A}$, and $\|\boldsymbol{a}\|$, $\|\boldsymbol{a}\|_{1}$ and $\|\boldsymbol{a}\|_{\infty}$ denote the $\ell_2$-, $\ell_1$-, and $\ell_\infty$-norms of $\boldsymbol{a}$, respectively.

\section{Preliminaries}\label{sec:preliminary}

In this paper, we consider the problem of solving the following constrained nonlinear equations:
\begin{equation}
	F(\boldsymbol{x}) = \boldsymbol{0},\quad \boldsymbol{x} \in \Omega,
	\label{eq:mono_equations}
\end{equation}
where $F: \mathbb{R}^n \rightarrow \mathbb{R}^n$ is a mapping of the form $F(\boldsymbol{x}) = [F_1(\boldsymbol{x}), \dots, F_n(\boldsymbol{x})]^T$, and $\Omega \subseteq \mathbb{R}^n$ is a nonempty, closed, and convex set.

Most existing works assume that the mapping $F$ is monotone and $L$-Lipschitz continuous, as defined below:
\begin{assumption} \label{assumption:monotone}
	The mapping $F: \mathbb{R}^n \rightarrow \mathbb{R}^n$ is monotone, i.e.,
	$$
	\left\langle F(\boldsymbol{x}) - F(\boldsymbol{y}), \boldsymbol{x} - \boldsymbol{y}	\right\rangle \geq 0, \quad \forall \boldsymbol{x}, \boldsymbol{y} \in \mathbb{R}^n.
	$$
\end{assumption}

\begin{assumption} \label{assumption:Lipschitz}
	The mapping $F: \mathbb{R}^n \rightarrow \mathbb{R}^n$ is $L$-Lipschitz continuous, i.e., there exists a constant $L$ such that
	$$
	\left\| F(\boldsymbol{x}) - F(\boldsymbol{y}) \right\| \leq L \left\| \boldsymbol{x} - \boldsymbol{y} \right\|,\quad \forall \boldsymbol{x}, \boldsymbol{y} \in \mathbb{R}^n.
	$$
\end{assumption}

Introduced by \cite{korpelevich1976extragradient}, the EG method is one of the most established approaches for solving monotone equations as in \cref{eq:mono_equations}.
Starting from an initial point $\boldsymbol{x}_0 \in \mathbb{R}^n$, the method alternates between an extragradient step and an update step, as described below:
$$
\begin{aligned}
	\boldsymbol{y}_k &= \boldsymbol{x}_k - \alpha F(\boldsymbol{x}_k), \\
	\boldsymbol{x}_{k+1} &= P_{\Omega} \left(\boldsymbol{x}_k - \alpha F(\boldsymbol{y}_k)\right),
\end{aligned}
$$
where $k$ is the iteration index, $\alpha > 0$ is the stepsize, and $P_{\Omega}$ denotes the projection operator onto the set $\Omega$.
In practice, the choice of $\alpha$ is critical, as it directly affects both convergence speed and numerical stability.

To leverage a more advanced and flexible benchmark, we adopt a two-time-scale EG variant in which the update step uses an adaptive stepsize $\beta_k$. This modification alleviates the rigid dependence on the global Lipschitz constant required by the classical EG method:
\begin{equation}
	\boxed{
		\begin{aligned}
			\boldsymbol{y}_k &= \boldsymbol{x}_k - \alpha F(\boldsymbol{x}_k), \\
			\boldsymbol{x}_{k+1} &= P_{\Omega} \left(\boldsymbol{x}_k - \beta_k F(\boldsymbol{y}_k)\right),
		\end{aligned}
	}
	\label{eq:eg}
\end{equation}
where $\alpha = \rho / L$ with a fixed parameter $\rho \in (0,1)$. The stepsize $\beta_k$ is computed as
\begin{equation}
	\beta_k = \frac{F(\boldsymbol{y}_k)^T\left(\boldsymbol{x}_k - \boldsymbol{y}_k\right)}{\|F(\boldsymbol{y}_k)\|^2}.
	\label{eq:beta}
\end{equation}

Despite its popularity, the EG method has two notable limitations:

\begin{itemize}
	\item [(i)] The convergence of EG requires the stepsize $\alpha$ to satisfy $0 < \alpha < 1/L$, where $L$ is the global Lipschitz constant of the mapping $F$ \cite{korpelevich1976extragradient}. In practice, $L$ is often difficult to compute or estimate reliably. As a result, manual tuning or line-search procedures are typically employed to select a suitable stepsize, which introduces additional computational overhead and reduces practicality.
	\item [(ii)] Each iteration of EG requires two full evaluations of the mapping $F$, which can be computationally intensive or even prohibitive in large-scale problems.
\end{itemize}

In this paper, we propose a strategy that eliminates the stepsize dependency on the global Lipschitz constant $L$ and requires only a single full evaluation of the mapping $F$ per iteration. This strategy is based on the assumption that $F$ is componentwise Lipschitz continuous \cite{nesterov2012efficiency}, which is a strictly weaker condition than \cref{assumption:Lipschitz}. The formal definition is given below:

\begin{definition}
	A mapping $F: \mathbb{R}^n \rightarrow \mathbb{R}^n$ is said to be componentwise Lipschitz continuous if there exist constants $l_1, \dots, l_n > 0$ such that for all $i = 1, \dots, n$,
	\begin{equation}
		|F_i(\boldsymbol{x}+te_i) - F_i(\boldsymbol{x})| \leq l_i |t|, \quad \forall \boldsymbol{x} \in \mathbb{R}^n, t \in \mathbb{R},
		\label{eq:def_comp_lip}
	\end{equation}
	where $e_i$ denotes the $i$-th standard basis vector in $\mathbb{R}^n$.
\end{definition}

\begin{remark} \label{remk:lip_comp}
	It can be readily verified that if $F$ is $L$-Lipschitz continuous, then it is also componentwise Lipschitz continuous, with constants $l_i \leq L$ for all $i = 1, \dots, n$. Moreover, the constants $l_i$ are often easier to compute, especially in large-scale problems. We demonstrate this observation in the numerical experiments section.
\end{remark}

Throughout the paper, we assume that both Assumptions~\ref{assumption:monotone} and \ref{assumption:Lipschitz} hold. In addition, we further assume that the mapping $F$ satisfies the following condition:

\begin{assumption} \label{assumption:comp_lip}
	The mapping $F: \mathbb{R}^n \rightarrow \mathbb{R}^n$ is componentwise Lipschitz continuous with constants $l_1, \dots, l_n > 0$.
\end{assumption}

As discussed in \cref{remk:lip_comp}, \cref{assumption:comp_lip} is implied by \cref{assumption:Lipschitz}. Therefore, our analysis introduces no additional assumptions.

\begin{remark}
	\Cref{assumption:Lipschitz} is imposed solely for the purpose of convergence analysis. In practice, the proposed algorithms do not require access to the global Lipschitz constant $L$; instead, they rely only on the componentwise constants $l_1,\dots, l_n$, which are substantially easier to compute and are generally much smaller than the global constant $L$.
\end{remark}

\section{Mini-Extragradient Methods and Convergence Analysis} \label{sec:minieg}

Compared with the gradient descent method, the EG method offers the notable advantage of guaranteeing convergence under only Assumptions~\ref{assumption:monotone} and \ref{assumption:Lipschitz}. In this work, we further demonstrate that convergence can still be ensured even when the extragradient step updates only a single coordinate at each iteration, offering a more efficient alternative for high-dimensional problems.
We refer to these variants as Mini-Extragradient (Mini-EG) methods.

\subsection{Greedy Mini-EG}

The simplest and most intuitive coordinate selection approach is the greedy scheme, resulting in a variant we refer to as Greedy Mini-EG (G-Mini-EG):
\begin{equation}
	\boxed{
		\begin{aligned}
			i_k &= \arg\max_{1\leq i \leq n} |F_i(\boldsymbol{x}_k)|, \\
			\boldsymbol{y}_k &= \boldsymbol{x}_k - \frac{\rho}{l_{i_k}} F_{i_k}(\boldsymbol{x}_k) e_{i_k}, \\
			\boldsymbol{x}_{k+1} &= P_{\Omega} \left(\boldsymbol{x}_k - \beta_k F(\boldsymbol{y}_k)\right),
		\end{aligned}
	}
	\label{eq:greedy_EG}
\end{equation}
where $\rho\in (0,1)$ is a fixed parameter, and the stepsize $\beta_k$ is identical to the EG stepsize rule in \cref{eq:beta}.

\begin{remark}
	(i) The proposed procedure relies only on the componentwise Lipschitz constants $l_1, \dots, l_n$, which are typically much easier to compute and substantially smaller than the global constant $L$.
	
	(ii) The computation of $\beta_k$ can be simplified due to the componentwise update structure. Compared with the EG method in \cref{eq:beta}, it avoids full vector products, and the resulting expression is
	\begin{equation}
		\beta_k = \frac{\rho F_{i_k}(\boldsymbol{y}_k) F_{i_k}(\boldsymbol{x}_k)}{l_{i_k} \|F(\boldsymbol{y}_k)\|^2}.
		\label{eq:beta_comp}
	\end{equation}
\end{remark}

We now proceed to establish the convergence of the G-Mini-EG algorithm.

It is well known that the projection operator onto a convex set is nonexpansive; see, for instance, \cite{bauschke2011convex}. For completeness, we restate this fundamental property below.

\begin{lemma} \label{lem:proj_monotone}
	let $\Omega \subseteq \mathbb{R}^n$ be a nonempty, closed, and convex set. Then the projection operator $P_{\Omega}$ is 1-Lipschitz continuous; that is,
	$$
	\left\|P_{\Omega}(\boldsymbol{x})-P_{\Omega}(\boldsymbol{y})\right\| \leq\|\boldsymbol{x} - \boldsymbol{y}\|,\quad \forall \boldsymbol{x}, \boldsymbol{y} \in \mathbb{R}^n.
	$$
\end{lemma}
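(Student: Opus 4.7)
The plan is to derive nonexpansiveness from the variational characterization of the projection, which is standard in convex analysis. Specifically, for a nonempty, closed, convex set $\Omega\subseteq\mathbb{R}^n$ and any $\boldsymbol{z}\in\mathbb{R}^n$, the point $\boldsymbol{u}=P_{\Omega}(\boldsymbol{z})$ is the unique element of $\Omega$ satisfying the obtuse-angle inequality $\langle \boldsymbol{z}-\boldsymbol{u},\boldsymbol{w}-\boldsymbol{u}\rangle\leq 0$ for every $\boldsymbol{w}\in\Omega$. This follows from the first-order optimality condition for the strongly convex minimization $\min_{\boldsymbol{w}\in\Omega}\tfrac{1}{2}\|\boldsymbol{w}-\boldsymbol{z}\|^2$, together with existence and uniqueness of the minimizer guaranteed by closedness and convexity of $\Omega$.

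With this tool in hand, I would set $\boldsymbol{u}=P_{\Omega}(\boldsymbol{x})$ and $\boldsymbol{v}=P_{\Omega}(\boldsymbol{y})$ and apply the variational inequality twice, once at $\boldsymbol{x}$ with test point $\boldsymbol{v}\in\Omega$ and once at $\boldsymbol{y}$ with test point $\boldsymbol{u}\in\Omega$, obtaining
$$
\langle \boldsymbol{x}-\boldsymbol{u},\boldsymbol{v}-\boldsymbol{u}\rangle\leq 0,\qquad \langle \boldsymbol{y}-\boldsymbol{v},\boldsymbol{u}-\boldsymbol{v}\rangle\leq 0.
$$
Adding the two inequalities cancels the cross terms and yields $\|\boldsymbol{u}-\boldsymbol{v}\|^2\leq\langle \boldsymbol{x}-\boldsymbol{y},\boldsymbol{u}-\boldsymbol{v}\rangle$, which in fact establishes the stronger firm nonexpansiveness (co-coercivity) of $P_{\Omega}$. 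A direct application of the Cauchy--Schwarz inequality on the right-hand side then gives $\|\boldsymbol{u}-\boldsymbol{v}\|\leq\|\boldsymbol{x}-\boldsymbol{y}\|$, which is exactly the claimed 1-Lipschitz bound.

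There is no real obstacle here: the only subtlety is invoking the variational characterization rather than trying to manipulate $\|P_{\Omega}(\boldsymbol{x})-P_{\Omega}(\boldsymbol{y})\|$ directly, which would not lead anywhere without exploiting the optimality conditions. If one wanted to avoid citing the variational inequality as a black box, I would insert a short derivation of it from the optimality of the projection, but since the result is standard and the excerpt explicitly permits citing \cite{bauschke2011convex}, the proof can remain compact.
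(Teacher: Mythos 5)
Your proof is correct and is the standard argument: the paper itself offers no proof of this lemma, merely restating it as a well-known fact with a citation to \cite{bauschke2011convex}, and your derivation via the variational characterization of the projection, followed by adding the two obtuse-angle inequalities and applying Cauchy--Schwarz, is exactly the textbook proof found in that reference (and indeed establishes the stronger firm nonexpansiveness along the way).
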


For the general mini-extragradient step
\begin{equation}
	\boldsymbol{y} = \boldsymbol{x} - \frac{\rho}{l_{i}} F_{i}(\boldsymbol{x}) e_{i},\quad \forall i = 1, \dots, n,
	\label{eq:mini-extra}
\end{equation}
we establish the following results:
\begin{lemma} \label{lem:Fy}
	Suppose that \cref{assumption:Lipschitz} holds. For the update in \cref{eq:mini-extra}, it holds that
	$$
	\|F(\boldsymbol{y})\| \leq \|F(\boldsymbol{x})\| + \frac{\rho L}{l_i} |F_i(\boldsymbol{x})|,\quad \forall i = 1, \dots, n.
	$$
\end{lemma}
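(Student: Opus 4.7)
The plan is to derive this bound by a direct triangle inequality, splitting $F(\boldsymbol{y})$ into $F(\boldsymbol{x})$ plus the deviation $F(\boldsymbol{y})-F(\boldsymbol{x})$, and then controlling the deviation term using the global Lipschitz continuity from \cref{assumption:Lipschitz}.

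Concretely, I would first write
\[
\|F(\boldsymbol{y})\| = \|F(\boldsymbol{x}) + (F(\boldsymbol{y}) - F(\boldsymbol{x}))\| \le \|F(\boldsymbol{x})\| + \|F(\boldsymbol{y}) - F(\boldsymbol{x})\|.
\]
Next, I would invoke \cref{assumption:Lipschitz} to bound $\|F(\boldsymbol{y}) - F(\boldsymbol{x})\| \le L\|\boldsymbol{y}-\boldsymbol{x}\|$. Finally, I would substitute the explicit form of $\boldsymbol{y}$ from \cref{eq:mini-extra}: since $\boldsymbol{y}-\boldsymbol{x} = -\frac{\rho}{l_i} F_i(\boldsymbol{x}) e_i$ and $\|e_i\|=1$, we get $\|\boldsymbol{y}-\boldsymbol{x}\| = \frac{\rho}{l_i} |F_i(\boldsymbol{x})|$. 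Plugging this in completes the inequality.

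There is essentially no obstacle here; the statement is an immediate application of the triangle inequality combined with global Lipschitz continuity. The only mild subtlety worth flagging is that the bound uses the global constant $L$ rather than the componentwise constant $l_i$, which is intentional: the componentwise constant only controls changes in $F_i$ along direction $e_i$, whereas here we need to control $\|F(\boldsymbol{y})-F(\boldsymbol{x})\|$ as a full vector norm involving all components of $F$. This is why \cref{assumption:Lipschitz}, and not just \cref{assumption:comp_lip}, is required at this step, even though the algorithm itself only uses $l_i$ to set the stepsize.
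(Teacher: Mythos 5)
Your proof is correct and matches the paper's argument exactly: bound $\|F(\boldsymbol{y})-F(\boldsymbol{x})\|$ by $L\|\boldsymbol{y}-\boldsymbol{x}\| = \frac{\rho L}{l_i}|F_i(\boldsymbol{x})|$ using \cref{assumption:Lipschitz} and the update \cref{eq:mini-extra}, then apply the triangle inequality. Your remark on why the global constant $L$ (rather than $l_i$) is needed here is also accurate.
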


\begin{proof}
	By \cref{assumption:Lipschitz} and the update in \cref{eq:mini-extra}, for any $i = 1, \dots, n$, we have
	$$
	\|F(\boldsymbol{y}) - F(\boldsymbol{x})\| \le L \|\boldsymbol{y} - \boldsymbol{x}\| = \frac{\rho L}{l_i} |F_i(\boldsymbol{x})|.
	$$
	The result follows immediately by the triangle inequality.
\end{proof}

To facilitate the subsequent analysis, we introduce the following relaxed bound.

\begin{corollary}\label{cor:Fy}
	Under the same assumption as in \cref{lem:Fy}, the update \cref{eq:mini-extra} satisfies
	$$
	\|F(\boldsymbol{y})\| \le \left(1+\frac{\rho L}{l_i}\right)\|F(\boldsymbol{x})\|,\quad \forall i = 1, \dots, n.
	$$
\end{corollary}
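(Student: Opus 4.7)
The plan is to apply \cref{lem:Fy} and then trivially bound the componentwise term by the full Euclidean norm. Specifically, I would first invoke the inequality $\|F(\boldsymbol{y})\| \leq \|F(\boldsymbol{x})\| + \frac{\rho L}{l_i}|F_i(\boldsymbol{x})|$ already established in the preceding lemma. The only remaining ingredient is the elementary fact that, for any vector $\boldsymbol{v} \in \mathbb{R}^n$ and any index $i$, one has $|v_i| \leq \|\boldsymbol{v}\|_\infty \leq \|\boldsymbol{v}\|$, which is the standard relationship between the $\ell_\infty$- and $\ell_2$-norms. Applying this with $\boldsymbol{v} = F(\boldsymbol{x})$ yields $|F_i(\boldsymbol{x})| \leq \|F(\boldsymbol{x})\|$; substituting this into the lemma's bound and factoring out $\|F(\boldsymbol{x})\|$ immediately produces the claimed inequality.

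There is essentially no obstacle here, since the corollary is nothing more than a deliberate relaxation of \cref{lem:Fy} that discards the finer coordinate-dependent factor $|F_i(\boldsymbol{x})|$ in exchange for a uniform factor $\|F(\boldsymbol{x})\|$. The utility of this weaker but structurally cleaner form will presumably appear later: bounds that depend purely on $\|F(\boldsymbol{x})\|$ are much more amenable to recursive or telescoping arguments across iterations than ones that intermix the full norm with individual coordinate magnitudes, so it is natural to record this relaxed version now for later reference.
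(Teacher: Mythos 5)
Your proposal is correct and matches the argument the paper leaves implicit: the corollary follows from \cref{lem:Fy} by the elementary bound $|F_i(\boldsymbol{x})| \leq \|F(\boldsymbol{x})\|$ and factoring out $\|F(\boldsymbol{x})\|$. Your reading of its purpose as a deliberate relaxation for later use (it is invoked in the proof of \cref{lem:random_EG_convergence_point}) is also accurate.
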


\begin{lemma} \label{lem:product}
	Under \cref{assumption:comp_lip}, consider the update in \cref{eq:mini-extra}.
	Then, for any $i = 1, \dots, n$, it holds that
	$$
	F(\boldsymbol{y})^T\left(\boldsymbol{x} - \boldsymbol{y}\right) \geq \frac{\rho(1-\rho)}{l_i} |F_i(\boldsymbol{x})|^2.
	$$
\end{lemma}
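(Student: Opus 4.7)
The plan is to exploit the fact that $\boldsymbol{x}-\boldsymbol{y}$ lies entirely along the $i$-th coordinate axis, so the inner product collapses to a single scalar product. More precisely, from \cref{eq:mini-extra} I would first rewrite
$$
F(\boldsymbol{y})^{T}(\boldsymbol{x}-\boldsymbol{y}) \;=\; F(\boldsymbol{y})^{T}\!\left(\frac{\rho}{l_i}F_i(\boldsymbol{x})\,e_i\right) \;=\; \frac{\rho}{l_i}\,F_i(\boldsymbol{x})\,F_i(\boldsymbol{y}),
$$
which reduces the task to showing $F_i(\boldsymbol{x})F_i(\boldsymbol{y}) \ge (1-\rho)\,|F_i(\boldsymbol{x})|^{2}$.

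Next I would control $F_i(\boldsymbol{y})$ in terms of $F_i(\boldsymbol{x})$ using the componentwise Lipschitz property from \cref{assumption:comp_lip}. Since $\boldsymbol{y}$ differs from $\boldsymbol{x}$ only in coordinate $i$ by the scalar $t = -\tfrac{\rho}{l_i}F_i(\boldsymbol{x})$, the definition in \cref{eq:def_comp_lip} gives $|F_i(\boldsymbol{y}) - F_i(\boldsymbol{x})| \le l_i|t| = \rho\,|F_i(\boldsymbol{x})|$. Writing $F_i(\boldsymbol{y}) = F_i(\boldsymbol{x}) + \delta$ with $|\delta|\le \rho|F_i(\boldsymbol{x})|$, I then obtain
$$
F_i(\boldsymbol{x})\,F_i(\boldsymbol{y}) \;=\; F_i(\boldsymbol{x})^{2} + F_i(\boldsymbol{x})\,\delta \;\ge\; F_i(\boldsymbol{x})^{2} - |F_i(\boldsymbol{x})|\cdot \rho\,|F_i(\boldsymbol{x})| \;=\; (1-\rho)\,|F_i(\boldsymbol{x})|^{2}.
$$
Combining this with the display above yields the claimed bound.

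There is no serious obstacle here: the argument is essentially a one-coordinate descent lemma, the only subtlety being the sign handling in the step $F_i(\boldsymbol{x})\delta \ge -|F_i(\boldsymbol{x})|\cdot|\delta|$. The factor $\rho(1-\rho)$ arises naturally from balancing the step length $\rho/l_i$ against the componentwise Lipschitz distortion $\rho|F_i(\boldsymbol{x})|$, which is exactly why $\rho$ must lie in $(0,1)$ to yield a strictly positive lower bound — this will matter later when this quantity is used to guarantee sufficient decrease in the convergence analysis.
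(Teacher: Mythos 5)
Your proof is correct and is essentially the paper's argument in a slightly reorganized form: both reduce the inner product to the single scalar $\tfrac{\rho}{l_i}F_i(\boldsymbol{x})F_i(\boldsymbol{y})$ (the paper does this via $(F(\boldsymbol{x})-F(\boldsymbol{y}))^T(\boldsymbol{x}-\boldsymbol{y})$ and then rearranges) and both invoke the componentwise Lipschitz bound $|F_i(\boldsymbol{y})-F_i(\boldsymbol{x})|\le \rho|F_i(\boldsymbol{x})|$ to absorb the perturbation. No gaps.
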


\begin{proof}
	Given \cref{assumption:comp_lip} and the form of the update in \cref{eq:mini-extra}, it follows that for any $i = 1, \dots, n$:
	$$
	\begin{aligned}
		\left(F(\boldsymbol{x}) - F(\boldsymbol{y})\right)^T\left(\boldsymbol{x} - \boldsymbol{y}\right) &= \frac{\rho}{l_i} \left(F_i(\boldsymbol{x}) - F_i(\boldsymbol{y})\right)F_i(\boldsymbol{x}) \\
		&\leq \frac{\rho}{l_i} |F_i(\boldsymbol{x}) - F_i(\boldsymbol{y})| |F_i(\boldsymbol{x})| \\
		&\leq \frac{\rho^2}{l_i} |F_i(\boldsymbol{x})|^2,
	\end{aligned}
	$$
	where the last inequality follows from the componentwise Lipschitz condition defined in \cref{eq:def_comp_lip}.
	
	Rearranging terms, we obtain
	$$
	\begin{aligned}
		F(\boldsymbol{y})^T\left(\boldsymbol{x} - \boldsymbol{y}\right) &\geq F(\boldsymbol{x})^T\left(\boldsymbol{x} - \boldsymbol{y}\right) - \frac{\rho^2}{l_i} |F_i(\boldsymbol{x})|^2 \\
		&= \frac{\rho}{l_i} |F_i(\boldsymbol{x})|^2 - \frac{\rho^2}{l_i} |F_i(\boldsymbol{x})|^2 \\
		&= \frac{\rho(1-\rho)}{l_i} |F_i(\boldsymbol{x})|^2,
	\end{aligned}
	$$
	which completes the proof.
\end{proof}

let $\boldsymbol{x}^* \in \Omega$ be a solution to problem \cref{eq:mono_equations}, i.e., $F(\boldsymbol{x}^*) = \boldsymbol{0}$, and define $l_{\max} = \max_{1\leq i \leq n} l_i$. With the preparatory results established above, we are ready to present the global convergence analysis of the G-Mini-EG method.

\begin{lemma} \label{lem:greedy_EG_convergence_point}
	Suppose that Assumptions~\ref{assumption:monotone}, \ref{assumption:Lipschitz}, and \ref{assumption:comp_lip} hold. Let $\{\boldsymbol{x}_k\}$ and $\{\boldsymbol{y}_k\}$ be the sequence generated by the method \cref{eq:greedy_EG}. Then, for all $k\geq 0$, it holds that
	\begin{equation}
		\|\boldsymbol{x}_{k} - \boldsymbol{x}^*\|^2 - \|\boldsymbol{x}_{k+1} - \boldsymbol{x}^*\|^2 \geq \frac{\rho^2(1-\rho)^2}{(\sqrt{n}l_{\max} + \rho L)^2} \|F(\boldsymbol{x}_k)\|_{\infty}^2.
		\label{eq:greed_EG_convergence_point}
	\end{equation}
\end{lemma}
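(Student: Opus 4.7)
The plan is to follow the standard EG descent-lemma template and then inject the componentwise bounds tailored to the greedy coordinate rule. First, by the nonexpansiveness of $P_\Omega$ (\cref{lem:proj_monotone}) and $\boldsymbol{x}^* = P_\Omega(\boldsymbol{x}^*)$, I will expand
\[
\|\boldsymbol{x}_{k+1}-\boldsymbol{x}^*\|^2 \le \|\boldsymbol{x}_k-\boldsymbol{x}^*\|^2 - 2\beta_k F(\boldsymbol{y}_k)^T(\boldsymbol{x}_k-\boldsymbol{x}^*) + \beta_k^2 \|F(\boldsymbol{y}_k)\|^2.
\]
Then I split $\boldsymbol{x}_k-\boldsymbol{x}^* = (\boldsymbol{x}_k-\boldsymbol{y}_k)+(\boldsymbol{y}_k-\boldsymbol{x}^*)$ and use \cref{assumption:monotone} with $F(\boldsymbol{x}^*)=\boldsymbol{0}$ to drop the term $F(\boldsymbol{y}_k)^T(\boldsymbol{y}_k-\boldsymbol{x}^*)\ge 0$, leaving a lower bound in terms of $F(\boldsymbol{y}_k)^T(\boldsymbol{x}_k-\boldsymbol{y}_k)$ only.

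Next, plugging in $\beta_k = F(\boldsymbol{y}_k)^T(\boldsymbol{x}_k-\boldsymbol{y}_k)/\|F(\boldsymbol{y}_k)\|^2$ collapses the last two terms into the familiar squared ratio, giving
\[
\|\boldsymbol{x}_k-\boldsymbol{x}^*\|^2 - \|\boldsymbol{x}_{k+1}-\boldsymbol{x}^*\|^2 \;\ge\; \frac{\bigl(F(\boldsymbol{y}_k)^T(\boldsymbol{x}_k-\boldsymbol{y}_k)\bigr)^2}{\|F(\boldsymbol{y}_k)\|^2}.
\]
At this point I would lower-bound the numerator by \cref{lem:product} applied with $i=i_k$, namely $F(\boldsymbol{y}_k)^T(\boldsymbol{x}_k-\boldsymbol{y}_k)\ge \rho(1-\rho) |F_{i_k}(\boldsymbol{x}_k)|^2/l_{i_k}$.

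The denominator is where the bookkeeping must be careful: the loose \cref{cor:Fy} bound would yield a factor $n(l_{\max}+\rho L)^2$ in the denominator, which is strictly weaker than the claimed $(\sqrt{n}\,l_{\max}+\rho L)^2$. So I will use the tighter \cref{lem:Fy}, $\|F(\boldsymbol{y}_k)\|\le \|F(\boldsymbol{x}_k)\|+\tfrac{\rho L}{l_{i_k}}|F_{i_k}(\boldsymbol{x}_k)|$, multiply through by $l_{i_k}$ to clear denominators, and obtain
\[
\frac{\bigl(F(\boldsymbol{y}_k)^T(\boldsymbol{x}_k-\boldsymbol{y}_k)\bigr)^2}{\|F(\boldsymbol{y}_k)\|^2} \;\ge\; \frac{\rho^2(1-\rho)^2\,|F_{i_k}(\boldsymbol{x}_k)|^4}{\bigl(l_{i_k}\|F(\boldsymbol{x}_k)\| + \rho L\,|F_{i_k}(\boldsymbol{x}_k)|\bigr)^2}.
\]

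Finally, invoking the greedy rule $|F_{i_k}(\boldsymbol{x}_k)| = \|F(\boldsymbol{x}_k)\|_\infty$ together with the elementary inequality $\|F(\boldsymbol{x}_k)\|\le\sqrt{n}\,\|F(\boldsymbol{x}_k)\|_\infty$ turns the denominator into $(\sqrt{n}\,l_{i_k}+\rho L)^2\|F(\boldsymbol{x}_k)\|_\infty^2$, and $l_{i_k}\le l_{\max}$ yields the claimed constant. The main obstacle is precisely this last denominator bookkeeping: it is tempting to substitute via \cref{cor:Fy}, but one must use \cref{lem:Fy} directly and only then invoke the $\ell_2$--$\ell_\infty$ equivalence so that the two sources of growth in $\|F(\boldsymbol{y}_k)\|$ combine into a single factor $\sqrt{n}\,l_{\max}+\rho L$ rather than being bounded separately.
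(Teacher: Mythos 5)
Your proposal is correct and follows essentially the same route as the paper's proof: nonexpansiveness of the projection, monotonicity to reduce to $F(\boldsymbol{y}_k)^T(\boldsymbol{x}_k-\boldsymbol{y}_k)$, substitution of $\beta_k$ to obtain the squared ratio, and then \cref{lem:product} for the numerator together with the tighter \cref{lem:Fy} (not \cref{cor:Fy}) for the denominator before applying the greedy rule and $\|F(\boldsymbol{x}_k)\|\le\sqrt{n}\|F(\boldsymbol{x}_k)\|_\infty$. Your remark that \cref{cor:Fy} would only yield the weaker constant $n(l_{\max}+\rho L)^2$ is accurate and matches the paper's choice of the sharper bound.
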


\begin{proof}
	From the update rule in \cref{eq:greedy_EG} and \cref{lem:proj_monotone}, we obtain
	\begin{equation}
		\begin{aligned}
			\|\boldsymbol{x}_{k+1} - \boldsymbol{x}^*\|^2 =& \|P_{\Omega} \left(\boldsymbol{x}_k - \beta_k F(\boldsymbol{y}_k)\right) - P_{\Omega} \left(\boldsymbol{x}^*\right)\|^2 \\
			\leq& \|\boldsymbol{x}_k - \boldsymbol{x}^* - \beta_k F(\boldsymbol{y}_k)\|^2 \\
			=& \|\boldsymbol{x}_k - \boldsymbol{x}^*\|^2 + \beta_k^2 \|F(\boldsymbol{y}_k)\|^2 - 2 \beta_k F(\boldsymbol{y}_k)^T (\boldsymbol{x}_k - \boldsymbol{x}^*).
		\end{aligned}
		\label{eq:greed_EG_bound_point}
	\end{equation}
	By the monotonicity of the mapping $F$ (\cref{assumption:monotone}), we have
	\begin{equation}
		\begin{aligned}
			F(\boldsymbol{y}_k)^T (\boldsymbol{x}_k - \boldsymbol{x}^*) =& F(\boldsymbol{y}_k)^T (\boldsymbol{x}_k - \boldsymbol{y}_k) + \left(F(\boldsymbol{y}_k) - F(\boldsymbol{x}^*)\right)^T (\boldsymbol{y}_k - \boldsymbol{x}^*) \\
			\geq& F(\boldsymbol{y}_k)^T (\boldsymbol{x}_k - \boldsymbol{y}_k).
		\end{aligned}
		\label{eq:greed_EG_monotone}
	\end{equation}
	Substituting \cref{eq:greed_EG_monotone} into \cref{eq:greed_EG_bound_point} yields
	$$
	\|\boldsymbol{x}_{k+1} - \boldsymbol{x}^*\|^2 \leq \|\boldsymbol{x}_k - \boldsymbol{x}^*\|^2 + \beta_k^2 \|F(\boldsymbol{y}_k)\|^2 - 2 \beta_k F(\boldsymbol{y}_k)^T (\boldsymbol{x}_k - \boldsymbol{y}_k).
	$$
	Using the definition of $\beta_k$ in \cref{eq:beta}, we further obtain
	$$
	\|\boldsymbol{x}_{k+1} - \boldsymbol{x}^*\|^2 \leq \|\boldsymbol{x}_k - \boldsymbol{x}^*\|^2 - \frac{\left(F(\boldsymbol{y}_k)^T (\boldsymbol{x}_k - \boldsymbol{y}_k)\right)^2}{\|F(\boldsymbol{y}_k)\|^2}.
	$$
	Finally, invoking \cref{lem:Fy,lem:product} yields
	$$
	\|\boldsymbol{x}_{k} - \boldsymbol{x}^*\|^2 - \|\boldsymbol{x}_{k+1} - \boldsymbol{x}^*\|^2 \geq \left(\frac{\rho(1-\rho)|F_{i_k}(\boldsymbol{x}_k)|^2}{l_{i_k} \|F(\boldsymbol{x}_k)\| + \rho L |F_{i_k}(\boldsymbol{x}_k)|}\right)^2.
	$$
	Under the greedy selection rule in \cref{eq:greedy_EG}, which ensures $|F_{i_k}(\boldsymbol{x}_k)| = \|F(\boldsymbol{x}_k)\|_{\infty}$, and using the relation $\|F(\boldsymbol{x}_k)\| \leq \sqrt{n}\, \|F(\boldsymbol{x}_k)\|_{\infty}$, we obtain
	$$
	\begin{aligned}
		\|\boldsymbol{x}_{k} - \boldsymbol{x}^*\|^2 - \|\boldsymbol{x}_{k+1} - \boldsymbol{x}^*\|^2
		&\geq \frac{\rho^2(1-\rho)^2}{(\sqrt{n}l_{i_k} + \rho L)^2} \|F(\boldsymbol{x}_k)\|_{\infty}^2\\
		&\geq \frac{\rho^2(1-\rho)^2}{(\sqrt{n}l_{\max} + \rho L)^2} \|F(\boldsymbol{x}_k)\|_{\infty}^2,
	\end{aligned}
	$$
	where the last inequality follows from $l_{i_k}\leq l_{\max}$.
\end{proof}

\begin{theorem}	\label{thm:greedy_EG_convergence_F}
	Suppose Assumptions~\ref{assumption:monotone}, \ref{assumption:Lipschitz}, and \ref{assumption:comp_lip} hold, and let the sequences $\{\boldsymbol{x}_k\}$ and $\{\boldsymbol{y}_k\}$ be generated by the method \cref{eq:greedy_EG}. Then, for any integer $K\geq 0$, we have
	\begin{equation}
		\min_{0\leq k \leq K} \|F(\boldsymbol{x}_k)\|_{\infty}^2 \leq \frac{(\sqrt{n}l_{\max} + \rho L)^2}{\rho^2(1-\rho)^2(K+1)} \|\boldsymbol{x}_{0} - \boldsymbol{x}^*\|^2.
		\label{eq:greed_EG_convergence_func}
	\end{equation}
	In addition, the residual sequence vanishes asymptotically:
	\begin{equation}
		\liminf_{k\rightarrow \infty} \|F(\boldsymbol{x}_k)\|_{\infty} = 0.
		\label{eq:greed_EG_convergence_func_lim}
	\end{equation}
\end{theorem}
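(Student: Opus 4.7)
The plan is to deduce both assertions directly by telescoping the per-iterate descent inequality from \cref{lem:greedy_EG_convergence_point}. Since that lemma already supplies
\[
\|\boldsymbol{x}_{k} - \boldsymbol{x}^*\|^2 - \|\boldsymbol{x}_{k+1} - \boldsymbol{x}^*\|^2 \geq \frac{\rho^2(1-\rho)^2}{(\sqrt{n}l_{\max} + \rho L)^2}\, \|F(\boldsymbol{x}_k)\|_{\infty}^2,
\]
the main work is purely telescopic and requires no new structural input from the update rule.

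First, I would sum this inequality over $k = 0, 1, \dots, K$. The left-hand side collapses to $\|\boldsymbol{x}_0 - \boldsymbol{x}^*\|^2 - \|\boldsymbol{x}_{K+1} - \boldsymbol{x}^*\|^2$, which is bounded above by $\|\boldsymbol{x}_0 - \boldsymbol{x}^*\|^2$ since the squared distance is nonnegative. This yields
\[
\sum_{k=0}^{K} \|F(\boldsymbol{x}_k)\|_{\infty}^2 \leq \frac{(\sqrt{n}l_{\max} + \rho L)^2}{\rho^2(1-\rho)^2}\, \|\boldsymbol{x}_0 - \boldsymbol{x}^*\|^2.
\]
Then I would invoke the elementary fact that the minimum of a finite set of nonnegative numbers does not exceed their average, so $(K+1)\min_{0\le k\le K} \|F(\boldsymbol{x}_k)\|_{\infty}^2 \leq \sum_{k=0}^{K} \|F(\boldsymbol{x}_k)\|_{\infty}^2$. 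Dividing by $K+1$ produces the ergodic bound \cref{eq:greed_EG_convergence_func}.

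For the asymptotic statement \cref{eq:greed_EG_convergence_func_lim}, I would let $K \to \infty$ in the summed inequality above. Because the right-hand side is independent of $K$, the series $\sum_{k=0}^{\infty}\|F(\boldsymbol{x}_k)\|_{\infty}^2$ converges, forcing the general term to tend to zero. In particular $\liminf_{k\to\infty}\|F(\boldsymbol{x}_k)\|_{\infty} = 0$ follows immediately (and, in fact, the full limit is zero, although only the weaker $\liminf$ form is claimed here).

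There is no serious obstacle: all the analytic content—monotonicity, Lipschitz continuity, componentwise Lipschitz continuity, the greedy selection rule, and the stepsize $\beta_k$—has already been absorbed into \cref{lem:greedy_EG_convergence_point}. The only care needed is to check that no sign or constant is lost in telescoping, and that the bound $\|\boldsymbol{x}_{K+1} - \boldsymbol{x}^*\|^2 \geq 0$ is applied in the correct direction so that the right-hand constants in \cref{eq:greed_EG_convergence_func} match exactly.
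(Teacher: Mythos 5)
Your proposal is correct and follows essentially the same route as the paper's proof: telescoping \cref{lem:greedy_EG_convergence_point}, bounding the minimum by the average, and letting $K\to\infty$ for the asymptotic claim. Your added remark that the convergence of the series actually forces the full limit (not just the $\liminf$) to be zero is a valid, slightly stronger observation than what the theorem states.
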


\begin{proof}
	Summing inequality \cref{eq:greed_EG_convergence_point} from $k=0$ to $K$, we obtain
	$$
	\begin{aligned}
		\frac{\rho^2(1-\rho)^2}{(\sqrt{n}l_{\max} + \rho L)^2} \sum_{k=0}^{K} \|F(\boldsymbol{x}_k)\|_{\infty}^2 &\leq \|\boldsymbol{x}_{0} - \boldsymbol{x}^*\|^2 - \|\boldsymbol{x}_{K+1} - \boldsymbol{x}^*\|^2 \\
		&\leq \|\boldsymbol{x}_{0} - \boldsymbol{x}^*\|^2.
	\end{aligned}
	$$
	On the other hand, the minimum is bounded by the average:
	$$
	\min_{0\leq k \leq K} \|F(\boldsymbol{x}_k)\|_{\infty}^2 \leq \frac{1}{K+1} \sum_{k=0}^{K} \|F(\boldsymbol{x}_k)\|_{\infty}^2.
	$$
	Combining the two inequalities yields \cref{eq:greed_EG_convergence_func}. Taking the limit as $K \rightarrow \infty$ gives \cref{eq:greed_EG_convergence_func_lim}, which completes the proof.
\end{proof}

Define the coupling factor $\kappa = {L}/{l_{\max}}$, which measures how much the global Lipschitz constant can exceed the directional derivative bounds along coordinate axes. We use this quantity to compare the convergence behavior of the G-Mini-EG method with that of the classical EG algorithm. It is well known that $\kappa \in [1, n]$ \cite{wright2015coordinate}.
A key observation is that, unlike the convergence guarantees for classical coordinate-descent methods \cite{nesterov2012efficiency}, the coefficient of $L$ in \cref{thm:greedy_EG_convergence_F} is independent of the problem dimension $n$. Moreover, in regimes where $\kappa > \sqrt{n}$, which frequently occur in least absolute shrinkage and selection operator (LASSO) problems \cite{tibshirani1996regression} (see \cref{eq:cs_L,eq:cs_li} in \cref{sec:cs}), the worst-case complexity of the G-Mini-EG method coincides with that of the standard EG algorithm.\footnote{For a random matrix with independent, zero-mean entries of finite variance, random matrix theory provides an average-case estimate~\cite{bandeira2016sharp}. The spectral norm (global Lipschitz constant) scales as $\Theta(\sqrt{n})$, whereas the maximum diagonal element typically scales only as $\Theta(\sqrt{\log n})$. Consequently, the expected coupling factor grows as $\Theta(\sqrt{n / \log n})$.}

\subsection{Random Mini-EG}

Although we have rigorously established the convergence of the G-Mini-EG, where only a single coordinate is updated at each extragradient step and the stepsize is determined solely by componentwise Lipschitz constants, the method still requires two full evaluations of the mapping $F$ per iteration to identify the dominant component. This becomes computationally prohibitive in large-scale settings.

To address this limitation, we introduce a more efficient strategy inspired by random coordinate descent \cite{nesterov2012efficiency}. At each iteration $k$, the index $i_k$ is sampled according to a probability distribution, leading to the following variant, which we refer to as the Random Mini-EG (R-Mini-EG):
\begin{equation}
	\boxed{
		\begin{aligned}
			i_k &= \mathcal{A}_{\gamma}, \\
			\boldsymbol{y}_k &= \boldsymbol{x}_k - \frac{\rho}{l_{i_k}} F_{i_k}(\boldsymbol{x}_k) e_{i_k}, \\
			\boldsymbol{x}_{k+1} &= P_{\Omega} \left(\boldsymbol{x}_k - \beta_k F(\boldsymbol{y}_k)\right),
		\end{aligned}
	}
	\label{eq:random_EG}
\end{equation}
Here, the operation $i_k = \mathcal{A}_{\gamma}$ indicates that the index $i_k \in \left\{1,\dots,n\right\}$ is drawn independently according to the probability distribution
$$
p_{\gamma}(i) = \frac{l_i^{\gamma}}{\sum_{j=1}^{n} l_j^{\gamma}},\quad \forall i = 1, \dots, n,
$$
with parameter $\gamma \geq 0$ is a tunable parameter. The scalar $\rho\in(0,1)$, and the stepsize $\beta_k$ is computed as in \cref{eq:beta_comp}.

To establish the convergence of the R-Mini-EG, we first introduce the following weighted norm \cite{nesterov2012efficiency}:
\begin{equation}
	\|\boldsymbol{x}\|_{[\gamma]}=\sqrt{\sum_{i=1}^n l_i^\gamma x_i^2}.
	\label{eq:weighted_norm}
\end{equation}

\begin{lemma} \label{lem:random_EG_convergence_point}
	Let Assumptions~\ref{assumption:monotone}, \ref{assumption:Lipschitz}, and \ref{assumption:comp_lip} hold. Consider the iterates $\{\boldsymbol{x}_k\}$ and $\{\boldsymbol{y}_k\}$ generated by the method \cref{eq:random_EG}. Then, for all $k\geq 0$, the following inequality holds:
	\begin{equation}
		\|\boldsymbol{x}_{k} - \boldsymbol{x}^*\|^2 - E_{i_k} \left[\|\boldsymbol{x}_{k+1} - \boldsymbol{x}^*\|^2\right] \geq \frac{\rho^2(1-\rho)^2 \|F^2(\boldsymbol{x}_k)\|_{[\gamma]}^2}{\sum_{i=1}^{n}l_i^{\gamma} (l_{\max} + \rho L)^2 \|F(\boldsymbol{x}_k)\|^2}.
		\label{eq:random_EG_convergence_point}
	\end{equation}
	where $F^2(\boldsymbol{x}) = [F^2_1(\boldsymbol{x}), \dots, F^2_n(\boldsymbol{x})]^T$.
\end{lemma}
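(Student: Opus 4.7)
My plan is to follow the same blueprint as the proof of \cref{lem:greedy_EG_convergence_point}, replacing the greedy maximum by an expectation under the sampling distribution $p_{\gamma}(i)=l_i^{\gamma}/\sum_j l_j^{\gamma}$. Concretely, the first block of the argument is identical up to the point where the index $i_k$ is selected: using nonexpansiveness of $P_{\Omega}$, the monotonicity of $F$, and the definition of $\beta_k$ in \cref{eq:beta_comp}, I obtain the pathwise bound
$$
\|\boldsymbol{x}_{k}-\boldsymbol{x}^*\|^2-\|\boldsymbol{x}_{k+1}-\boldsymbol{x}^*\|^2\;\geq\;\frac{\bigl(F(\boldsymbol{y}_k)^T(\boldsymbol{x}_k-\boldsymbol{y}_k)\bigr)^2}{\|F(\boldsymbol{y}_k)\|^2},
$$
and then apply \cref{lem:Fy,lem:product} to get
$$
\|\boldsymbol{x}_{k}-\boldsymbol{x}^*\|^2-\|\boldsymbol{x}_{k+1}-\boldsymbol{x}^*\|^2\;\geq\;\left(\frac{\rho(1-\rho)\,F_{i_k}^2(\boldsymbol{x}_k)}{l_{i_k}\|F(\boldsymbol{x}_k)\|+\rho L\,|F_{i_k}(\boldsymbol{x}_k)|}\right)^{\!2}.
$$
This inequality does not depend on how $i_k$ was chosen, so it is valid for the random sampling rule as well.

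The crux is then to take a conditional expectation over $i_k$. The main obstacle is that both the numerator and the denominator depend on $i_k$, so the expectation is not immediately tractable. The key step I would use to decouple them is to relax the denominator by the uniform upper bounds $l_{i_k}\leq l_{\max}$ and $|F_{i_k}(\boldsymbol{x}_k)|\leq \|F(\boldsymbol{x}_k)\|$, which yields
$$
l_{i_k}\|F(\boldsymbol{x}_k)\|+\rho L\,|F_{i_k}(\boldsymbol{x}_k)|\;\leq\;(l_{\max}+\rho L)\,\|F(\boldsymbol{x}_k)\|,
$$
so that the $i_k$-dependence is concentrated into a clean $F_{i_k}^4(\boldsymbol{x}_k)$ factor. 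I would then take $E_{i_k}[\cdot]$, and since $i_k$ is drawn according to $p_{\gamma}$, the expectation of $F_{i_k}^4(\boldsymbol{x}_k)$ collapses to
$$
\sum_{i=1}^{n}\frac{l_i^{\gamma}}{\sum_{j=1}^{n} l_j^{\gamma}}\,F_i^4(\boldsymbol{x}_k)\;=\;\frac{\|F^2(\boldsymbol{x}_k)\|_{[\gamma]}^2}{\sum_{j=1}^{n} l_j^{\gamma}},
$$
by the definition of the weighted norm in \cref{eq:weighted_norm}. Plugging this identity back in produces exactly the right-hand side of \cref{eq:random_EG_convergence_point}.

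I expect the only delicate point to be bookkeeping the relaxation in the denominator: the $(l_{\max}+\rho L)^2\|F(\boldsymbol{x}_k)\|^2$ factor in the statement indicates that the bound is intentionally loose to obtain a closed-form expectation, rather than preserving the sharper $l_{i_k}$-dependent form. Apart from this, the argument is a routine combination of the earlier lemmas and the law of total expectation.
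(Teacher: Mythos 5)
Your proposal is correct and follows essentially the same route as the paper: the pathwise bound is derived exactly as in \cref{lem:greedy_EG_convergence_point}, the denominator is decoupled from $i_k$ via $l_{i_k}\le l_{\max}$ and $|F_{i_k}(\boldsymbol{x}_k)|\le\|F(\boldsymbol{x}_k)\|$ (the paper packages this same relaxation as \cref{cor:Fy}), and the expectation over $p_\gamma$ collapses to $\|F^2(\boldsymbol{x}_k)\|_{[\gamma]}^2/\sum_j l_j^\gamma$ exactly as in the paper's proof.
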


\begin{proof}
	Proceeding analogously to the proof of \cref{lem:greedy_EG_convergence_point} and using the looser bound in \cref{cor:Fy} instead of \cref{lem:Fy}, we arrive at
	\begin{equation}
		\|\boldsymbol{x}_{k} - \boldsymbol{x}^*\|^2 - \|\boldsymbol{x}_{k+1} - \boldsymbol{x}^*\|^2 \geq \frac{\rho^2(1-\rho)^2 |F_{i_k}(\boldsymbol{x}_k)|^4}{(l_{\max} + \rho L)^2 \|F(\boldsymbol{x}_k)\|^2}.
		\label{eq:random_EG_convergence_point_component}
	\end{equation}
	Taking expectation with respect to the random variable $i_k$, we have
	$$
	\|\boldsymbol{x}_{k} - \boldsymbol{x}^*\|^2 - E_{i_k} \left[\|\boldsymbol{x}_{k+1} - \boldsymbol{x}^*\|^2\right] \geq \frac{\rho^2(1-\rho)^2}{(l_{\max} + \rho L)^2 \|F(\boldsymbol{x}_k)\|^2} \sum_{i=1}^{n} \frac{l_i^{\gamma}}{\sum_{j=1}^{n} l_j^{\gamma}} |F_{i}(\boldsymbol{x}_k)|^4,
	$$
	which proves inequality~\cref{eq:random_EG_convergence_point}.
\end{proof}

\begin{theorem} \label{thm:random_EG_convergence_F}
	Assume that Assumptions~\ref{assumption:monotone}, \ref{assumption:Lipschitz}, and \ref{assumption:comp_lip} hold. Let $\{\boldsymbol{x}_k\}$ and $\{\boldsymbol{y}_k\}$ be the iterates generated by the method \cref{eq:random_EG}. Then, for any integer $K \geq 0$, the following bound holds:
	\begin{equation}
		\min_{0\leq k \leq K} E_{i_{k-1}} \left[\frac{\|F^2(\boldsymbol{x}_k)\|_{[\gamma]}^2}{\|F(\boldsymbol{x}_k)\|^2}\right] \leq \frac{\sum_{i=1}^{n}l_i^{\gamma} (l_{\max} + \rho L)^2}{\rho^2(1-\rho)^2 (K+1)} \|\boldsymbol{x}_{0} - \boldsymbol{x}^*\|^2.
		\label{eq:random_EG_convergence_func}
	\end{equation}
	Furthermore, the expected residual converges to zero:
	\begin{equation}
		\liminf_{k\rightarrow \infty} E_{i_{k-1}} \left[\|F(\boldsymbol{x}_k)\|\right] = 0.
		\label{eq:random_EG_convergence_func_lim}
	\end{equation}
\end{theorem}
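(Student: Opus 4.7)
The plan is to mirror the deterministic argument leading to \cref{thm:greedy_EG_convergence_F}, upgrading the one-step inequality in \cref{lem:random_EG_convergence_point} to a telescoping series by taking total expectation, and then bridging from the weighted ratio to $\|F(\boldsymbol{x}_k)\|$ itself in order to obtain the liminf statement.

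First I would take the total expectation of \cref{eq:random_EG_convergence_point}, which is a conditional inequality given the randomness $i_0,\dots,i_{k-1}$ that produces $\boldsymbol{x}_k$. By the tower property this yields
$$
E[\|\boldsymbol{x}_k-\boldsymbol{x}^*\|^2] - E[\|\boldsymbol{x}_{k+1}-\boldsymbol{x}^*\|^2] \;\geq\; \frac{\rho^2(1-\rho)^2}{\sum_{i=1}^n l_i^{\gamma}(l_{\max}+\rho L)^2}\, E\!\left[\frac{\|F^2(\boldsymbol{x}_k)\|_{[\gamma]}^2}{\|F(\boldsymbol{x}_k)\|^2}\right].
$$
Summing from $k=0$ to $K$ telescopes the left-hand side to $\|\boldsymbol{x}_0-\boldsymbol{x}^*\|^2 - E[\|\boldsymbol{x}_{K+1}-\boldsymbol{x}^*\|^2] \le \|\boldsymbol{x}_0-\boldsymbol{x}^*\|^2$. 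Rearranging constants and invoking the elementary fact that the minimum of $K+1$ nonnegative numbers is bounded by their average immediately yields \cref{eq:random_EG_convergence_func}.

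For the liminf assertion \cref{eq:random_EG_convergence_func_lim}, the above bound controls only the weighted ratio, not $\|F(\boldsymbol{x}_k)\|$ directly, so a short bridge is needed. The key pointwise estimate is
$$
\|F^2(\boldsymbol{x}_k)\|_{[\gamma]}^2 \;=\; \sum_{i=1}^n l_i^{\gamma} F_i(\boldsymbol{x}_k)^4 \;\geq\; (\min_i l_i)^{\gamma}\, \sum_{i=1}^n F_i(\boldsymbol{x}_k)^4 \;\geq\; \frac{(\min_i l_i)^{\gamma}}{n}\,\|F(\boldsymbol{x}_k)\|^4,
$$
obtained by the Cauchy--Schwarz inequality in the form $\bigl(\sum_i F_i^2\bigr)^2 \le n\sum_i F_i^4$. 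Dividing by $\|F(\boldsymbol{x}_k)\|^2$ and then combining with Jensen's inequality $E[\|F(\boldsymbol{x}_k)\|^2] \geq E[\|F(\boldsymbol{x}_k)\|]^2$ produces a lower bound on the ratio-expectation in terms of $E[\|F(\boldsymbol{x}_k)\|]^2$. Letting $K\to\infty$ in \cref{eq:random_EG_convergence_func} forces the liminf of the ratio-expectation to vanish, and this transfers, through the chain above, to the desired $\liminf_k E[\|F(\boldsymbol{x}_k)\|]=0$.

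The main obstacle is essentially bookkeeping: keeping conditional and unconditional expectations straight when promoting \cref{eq:random_EG_convergence_point} to its unconditional form, and ensuring that the auxiliary inequalities (Cauchy--Schwarz, $l_i^{\gamma}\ge(\min_i l_i)^{\gamma}$, and Jensen) all point in the right direction so that smallness of the weighted ratio cleanly implies smallness of $E[\|F(\boldsymbol{x}_k)\|]$. Once this bridge is in place, the argument is a faithful randomized counterpart of the telescoping proof already carried out for the greedy method.
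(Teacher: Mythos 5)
Your proposal is correct and follows essentially the same route as the paper: take (total) expectation of the one-step inequality, telescope, bound the minimum by the average, and let $K\to\infty$. In fact your Cauchy--Schwarz/Jensen bridge $\|F^2(\boldsymbol{x}_k)\|_{[\gamma]}^2/\|F(\boldsymbol{x}_k)\|^2 \geq \tfrac{(\min_i l_i)^{\gamma}}{n}\|F(\boldsymbol{x}_k)\|^2 \geq \tfrac{(\min_i l_i)^{\gamma}}{n}\,E[\|F(\boldsymbol{x}_k)\|]^2$ makes explicit the final step that the paper only asserts with ``which directly implies.''
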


\begin{proof}
	Taking expectation with respect to $i_{k-1}$ on both sides of inequality~\cref{eq:random_EG_convergence_point}, we obtain
	$$
	\begin{aligned}
		&E_{i_{k-1}} \left[\|\boldsymbol{x}_{k} - \boldsymbol{x}^*\|^2\right] - E_{i_k} \left[\|\boldsymbol{x}_{k+1} - \boldsymbol{x}^*\|^2\right] \\
		&\geq \frac{\rho^2(1-\rho)^2}{\sum_{i=1}^{n}l_i^{\gamma} (l_{\max} + \rho L)^2} E_{i_{k-1}} \left[\frac{\|F^2(\boldsymbol{x}_k)\|_{[\gamma]}^2}{\|F(\boldsymbol{x}_k)\|^2}\right].
	\end{aligned}
	$$
	Summing over $k=0$ to $K$ yields the bound in \cref{eq:random_EG_convergence_func}, following a similar argument as in the proof of \cref{thm:greedy_EG_convergence_F}.
	
	Taking the limit as $K \rightarrow \infty$, we obtain
	$$
	\liminf_{k\rightarrow \infty} E_{i_{k-1}} \left[\frac{\|F^2(\boldsymbol{x}_k)\|_{[\gamma]}^2}{\|F(\boldsymbol{x}_k)\|^2}\right] = 0,
	$$
	which directly implies the result in \cref{eq:random_EG_convergence_func_lim}.
\end{proof}

\subsection{Watchdog-Max}

\begin{figure}[t]
	\centering
	\includegraphics[width=0.5\columnwidth]{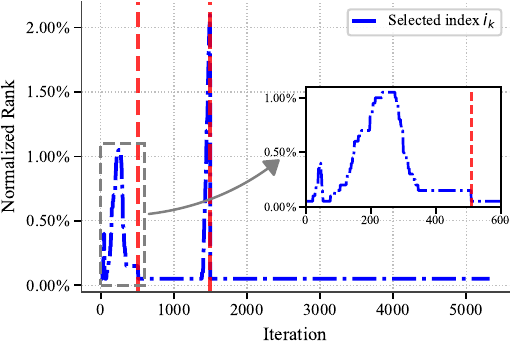}
	\caption{Slow decay of dominant component magnitudes in Watchdog-Max on colon-cancer dataset. The $y$-axis shows the normalized rank, computed as the rank of the component magnitude at the selected index $i_k$ divided by the feature dimension $n$. The red dashed line marks the detection of condition \cref{eq:window_adjust}, triggering adaptive window reset.}
	\label{fig:rankselection}
\end{figure}

Although R-Mini-EG overcomes the need for two full evaluations of the mapping $F$ per iteration in G-Mini-EG, empirical evidence shows that its purely random coordinate selection often requires substantially more iterations, resulting in higher overall computation time. This raises a natural question: how can we design a strategy that retains the computational efficiency of single-evaluation schemes while preserving the fast convergence behavior of greedy approaches?

An important observation is that the magnitude of the mapping component corresponding to the dominant coordinate tends to decay slowly. Specifically, if at $k_0$ the index $i_{k_0} = \arg\max_{1\leq i \leq n} |F_i(\boldsymbol{x}_{k_0})|$ is selected, then over a relatively long window of size $T$, the component $|F_{i_{k_0}}(\boldsymbol{x}_{k})|$ consistently remains among the top-ranked entries for all $k_0 \leq k \leq k_0 + T$.

\begin{remark}
	Although the function value corresponding to $i_{k_0}$ may not remain the exact top-1 entry throughout the window $k_0 \leq k \leq k_0 + T$, it typically stays among the top-ranked coordinates. As a result, the per-iteration efficiency is comparable to that of strictly applying the greedy scheme at every step. The key advantage of this approach is that it requires only a single full evaluation of $F(\boldsymbol{x}_{k_0})$ at the beginning of the window, while avoiding the need to compute $F(\boldsymbol{x}_k)$ at subsequent iteration within the window. This enables the selection of approximately greedy-optimal coordinates with significantly reduced computational cost.
\end{remark}

The window size $T$ can be adaptively adjusted by incorporating a random scheme. Specifically, during each window period $k_0 \leq k \leq k_0 + T$, we compute one additional coordinate value $F_{i_k^r}(\boldsymbol{x}_k)$ at each iteration, where $i_k^r = \mathcal{A}_{\gamma}$.  If at any iteration $k$ it is observed that 
\begin{equation}
	|F_{i_{k_0}}(\boldsymbol{x}_k)| < |F_{i_k^r}(\boldsymbol{x}_k)|,
	\label{eq:window_adjust}
\end{equation}
then iteration $k$ is designated as the new starting point of the next window, and the reference coordinate is updated to $i_k^r$. 

We refer to this adaptive strategy as Watchdog-Max, and its iteration procedure is summarized as follows. Starting from an initial point $\boldsymbol{x}_0 \in \mathbb{R}^n$, we first compute the initial index:
$$
i_0 = \arg\max_{1\leq i \leq n} |F_i(\boldsymbol{x}_0)|.
$$
Then, for each iteration $k \geq 1$, we perform:
\begin{equation}
	\boxed{
		\begin{aligned}
			i_k^r &= \mathcal{A}_{\gamma}, \\
			i_k &= \arg\max_{i_{k-1}, i_k^r} \left\{|F_{i_{k-1}}(\boldsymbol{x}_k)|, |F_{i_k^r}(\boldsymbol{x}_k)|\right\}, \\
			\boldsymbol{y}_k &= \boldsymbol{x}_k - \frac{\rho}{l_{i_k}} F_{i_k}(\boldsymbol{x}_k) e_{i_k}, \\
			\boldsymbol{x}_{k+1} &= P_{\Omega} \left(\boldsymbol{x}_k - \beta_k F(\boldsymbol{y}_k)\right).
		\end{aligned}
	}
	\label{eq:watchdog_max}
\end{equation}
Here, $\rho\in(0,1)$ is a scalar parameter, and $\beta_k$ is the stepsize defined in \cref{eq:beta_comp}.

\begin{remark}
	(i) The Watchdog-Max method performs a full evaluation of the mapping $F$ only once at the initial point to determine the initial greedy-optimal index. In all subsequent iterations, it evaluates only two coordinate values to execute the mini-extragradient step. The inclusion of a randomly sampled coordinate $i_k^r$ enables the algorithm to adaptively update the candidate for the greedy-optimal index. This design substantially reduces the computational cost per iteration and effectively balances the greedy and random selection strategies.
	
	(ii) Empirical evidence indicates that computing the exact greedy-optimal index at every iteration is unnecessary. In fact, the index selection strategy of Watchdog-Max can, in some cases, lead to fewer iterations than G-Mini-EG (see the numerical results in \cref{sec:experiment}).
\end{remark}

According to the index selection rule in \cref{eq:watchdog_max}, each iteration satisfies
\begin{equation}
	|F_{i_k}(\boldsymbol{x}_k)| \geq |F_{i_k^r}(\boldsymbol{x}_k)|,
	\label{eq:wm_over_reg}
\end{equation}
where $i_k^r = \mathcal{A}_{\gamma}$.
Substituting \eqref{eq:wm_over_reg} into \cref{eq:random_EG_convergence_point_component} immediately yields the following result.

\begin{corollary}
	Assume that Assumptions~\ref{assumption:monotone}, \ref{assumption:Lipschitz}, and \ref{assumption:comp_lip} hold. Then, the convergence guarantees established in \cref{thm:random_EG_convergence_F} also apply to the Watchdog-Max method.
\end{corollary}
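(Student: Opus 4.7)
The plan is to reuse the per-iteration descent inequality \cref{eq:random_EG_convergence_point_component} exactly as it stands, since its derivation depends only on the mini-extragradient update structure together with Assumptions~\ref{assumption:monotone}, \ref{assumption:Lipschitz}, and \ref{assumption:comp_lip}, and makes no use of how $i_k$ is actually chosen. Thus the inequality
\[
\|\boldsymbol{x}_{k} - \boldsymbol{x}^*\|^2 - \|\boldsymbol{x}_{k+1} - \boldsymbol{x}^*\|^2 \geq \frac{\rho^2(1-\rho)^2 |F_{i_k}(\boldsymbol{x}_k)|^4}{(l_{\max} + \rho L)^2 \|F(\boldsymbol{x}_k)\|^2}
\]
is valid whether $i_k$ is greedy, random, or produced by the Watchdog-Max rule.

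Next I would invoke the Watchdog-Max selection rule, which by construction guarantees \cref{eq:wm_over_reg}, namely $|F_{i_k}(\boldsymbol{x}_k)| \geq |F_{i_k^r}(\boldsymbol{x}_k)|$ with $i_k^r = \mathcal{A}_{\gamma}$. Substituting this into the right-hand side of the descent inequality gives the weaker but random-sample-only bound
\[
\|\boldsymbol{x}_{k} - \boldsymbol{x}^*\|^2 - \|\boldsymbol{x}_{k+1} - \boldsymbol{x}^*\|^2 \geq \frac{\rho^2(1-\rho)^2 |F_{i_k^r}(\boldsymbol{x}_k)|^4}{(l_{\max} + \rho L)^2 \|F(\boldsymbol{x}_k)\|^2}.
\]
The key point is that this right-hand side has exactly the same form as in the proof of \cref{lem:random_EG_convergence_point} for the R-Mini-EG method, with $i_k^r$ playing the role of the sampled index $i_k$ there.

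The third step is to take conditional expectation with respect to $i_k^r$, given the history $\{i_0, i_1^r, \dots, i_{k-1}^r\}$ that determines $\boldsymbol{x}_k$ and $i_{k-1}$. Since $i_k^r \sim p_\gamma$ independently of this history, the computation is identical to that in \cref{lem:random_EG_convergence_point}:
\[
E_{i_k^r}\!\left[|F_{i_k^r}(\boldsymbol{x}_k)|^4\right] = \sum_{i=1}^{n} \frac{l_i^{\gamma}}{\sum_{j=1}^{n} l_j^{\gamma}} |F_{i}(\boldsymbol{x}_k)|^4 = \frac{\|F^2(\boldsymbol{x}_k)\|_{[\gamma]}^2}{\sum_{j=1}^{n} l_j^{\gamma}},
\]
so we recover the bound of \cref{eq:random_EG_convergence_point} verbatim. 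Summing from $k=0$ to $K$, taking total expectations, and applying the same telescoping/averaging argument used for \cref{thm:random_EG_convergence_F} then yield both \cref{eq:random_EG_convergence_func} and \cref{eq:random_EG_convergence_func_lim} for Watchdog-Max.

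The only subtle point, and the main obstacle, is making the filtration bookkeeping rigorous: the Watchdog-Max index $i_k$ depends deterministically on $i_{k-1}$ and on the random draw $i_k^r$, so the proof must condition on the sigma-algebra generated by $(i_0, i_1^r, \dots, i_{k-1}^r)$ rather than on a single previous index as the notation $E_{i_{k-1}}$ suggests. Once that filtration is fixed and $i_k^r$ is identified as independent of it with distribution $p_\gamma$, the substitution step above and the tower property reduce the analysis to the R-Mini-EG case, and no new inequalities are required.
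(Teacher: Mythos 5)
Your proposal matches the paper's argument exactly: the paper also observes that the Watchdog-Max selection rule guarantees $|F_{i_k}(\boldsymbol{x}_k)| \geq |F_{i_k^r}(\boldsymbol{x}_k)|$ (equation \cref{eq:wm_over_reg}) and substitutes this into \cref{eq:random_EG_convergence_point_component} to reduce the analysis to the R-Mini-EG case. Your additional remarks on the expectation computation and the filtration bookkeeping only make explicit what the paper leaves implicit; no gap here.
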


With these results, we fully resolve the two challenges posed in the beginning. Although R-Mini-EG reduces the per-extragradient-step cost from a full mapping evaluation to essentially a single coordinate evaluation from a theoretical standpoint, the Watchdog-Max strategy further enhances practical efficiency by requiring only one additional coordinate evaluation per iteration.

\begin{table}[t]
	\centering
	\caption{Average performance on three real-world datasets. Best results are shown in \textbf{bold}. The last column reports the \textit{speedup} of Watchdog-Max over EG, calculated as the ratio of their Tcpu values.}
	\label{tab:lr_sota}
	\begin{sc}
		\begin{tabular}{lccccc}
			\toprule
			\textbf{Method} & \multicolumn{1}{c}{\textbf{Tcpu}} & \multicolumn{1}{c}{\textbf{Itr}} & \multicolumn{1}{c}{\textbf{NF}} & \multicolumn{1}{c}{$\boldsymbol{\|F^*\|}$} & \textit{Speedup} \\
			\midrule
			\multicolumn{6}{c}{\textbf{{Colon-cancer}} \cite{alon1999broad}} \\
			\midrule
			\textbf{EG}    & 2.54  & 15606.50  & 31213.00  & {9.87\text{e-}09} &  \multirow{3}[2]{*}{\textit{3.59$\times$}} \\
			\textbf{G-Mini-EG} & 0.87  & \textbf{5989.40}  & 11978.80  & 9.98\text{e-}09 &  \\
			\textbf{Watchdog-Max} & \textbf{0.71}  & 6137.90  & \textbf{6146.44}  & 9.99\text{e-}09 &  \\
			\midrule
			\multicolumn{6}{c}{\textbf{{Duke breast-cancer}} \cite{wen2014robust}} \\
			\midrule
			\textbf{EG}    & 34.44  & 130408.60  & 260817.20  & 9.94\text{e-}09 & \multirow{3}[2]{*}{{\textit{9.25$\times$}} } \\
			\textbf{G-Mini-EG} & 4.49  & \textbf{20225.00}  & 40450.00  & 9.91\text{e-}09 &  \\
			\textbf{Watchdog-Max} & \textbf{3.72}  & 20584.10  & \textbf{20593.17}  & {9.89\text{e-}09} &  \\
			\midrule
			\multicolumn{6}{c}{\textbf{{Leukemia}} \cite{golub1999molecular}} \\
			\midrule
			\textbf{EG}    & 23.61  & 91100.40  & 182200.80  & 9.83\text{e-}09 & \multirow{3}[2]{*}{{\textit{5.90$\times$}} } \\
			\textbf{G-Mini-EG} & 5.00  & \textbf{24479.40}  & 48958.80  & {9.69\text{e-}09} &  \\
			\textbf{Watchdog-Max} & \textbf{4.01}  & 25384.10  & \textbf{25394.62}  & 9.71\text{e-}09 &  \\
			\bottomrule
		\end{tabular}
	\end{sc}
\end{table}

\section{Numerical Experiments} \label{sec:experiment}

In this section, we evaluate the meta-level improvements achieved by the proposed Mini-EG variants over the vanilla EG method on two representative tasks: regularized decentralized logistic regression and compressed sensing. The comparison is carried out against the two-time-scale EG baseline in \cref{eq:eg}. Additional comparisons with recent baseline methods are provided in \cref{tab:cs_sota} of \cref{appendix:cs_sota}.

To ensure a fair comparison, the same stepsize strategy is applied to both the EG and Mini-EG variants. For all algorithms, we set $\rho = 0.999$ to allow a relatively large stepsize. The sensitivity of $\rho$ for both EG and Mini-EG is further examined in \cref{appendix:sensitivity_rho}. In addition, we set $\gamma = 0$ for the Watchdog-Max method. All algorithms terminate when $\|F(\boldsymbol{y}_k)\| \leq 10^{-8}$ or when the iteration count exceeds $500{,}000$.

Each experiment is repeated over 100 independent trials. We report the average number of iterations (\texttt{Itr}), function evaluations (\texttt{NF})\footnote{For Mini-EG methods, each component function evaluation counts as $1/n$ toward \texttt{NF}.}, CPU time in seconds (\texttt{Tcpu}), and the final residual $\|F(\boldsymbol{x})\|$ ($\|F^*\|$).

All experiments were performed in Python 3.9.12 on a Windows system, using a laptop equipped with an Intel i7-12700H processor (2.30 GHz) and 16 GB RAM.

\subsection{Regularized Decentralized Logistic Regression}

We first consider regularized decentralized logistic regression \cite{jian2022family}, which arises in regimes where the feature dimension $n$ far exceeds the sample size $N$. The problem is formulated as:
\begin{equation}
	\min _{\boldsymbol{x} \in \mathbb{R}^n} f(\boldsymbol{x})=\frac{1}{N} \sum_{i=1}^N \log \left(1+\exp \left(-b_i \boldsymbol{a}_i^T \boldsymbol{x}\right)\right)+\frac{\tau}{2}\|\boldsymbol{x}\|^2,
	\label{eq:lr_v0}
\end{equation}
where $\tau > 0$ is a regularization parameter, and $\left(\boldsymbol{a}_i, b_i\right) \in \mathbb{R}^n \times \{-1,1\}$ for $i = 1, \dots, N$ are data samples drawn from a given dataset or distribution. In all experiments, we fix the regularization parameter as $\tau = 0.1$.

\begin{figure}[t]
	\centering
	\begin{subfigure}[b]{0.32\linewidth}
		\centering
		\includegraphics[width=\linewidth]{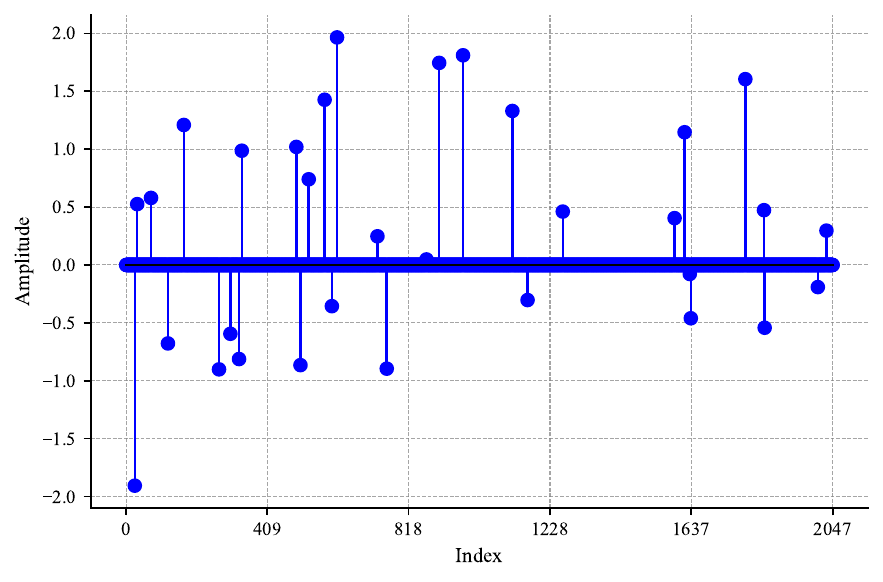}
		\caption{}
		\label{fig:ground_truth}
	\end{subfigure}
	\hfill
	\begin{subfigure}[b]{0.32\linewidth}
		\centering
		\includegraphics[width=\linewidth]{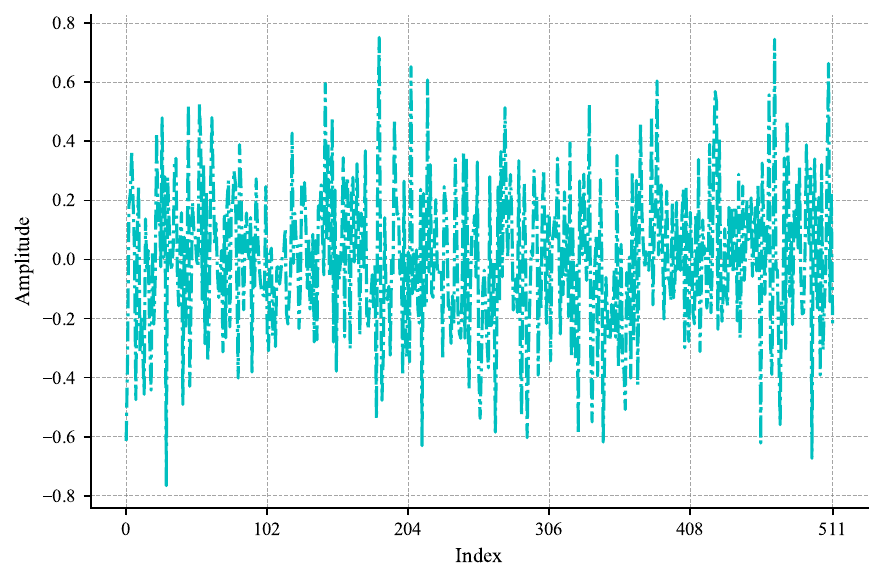}
		\caption{}
		\label{fig:observation_b}
	\end{subfigure}
	\hfill
	\begin{subfigure}[b]{0.32\linewidth}
		\centering
		\includegraphics[width=\linewidth]{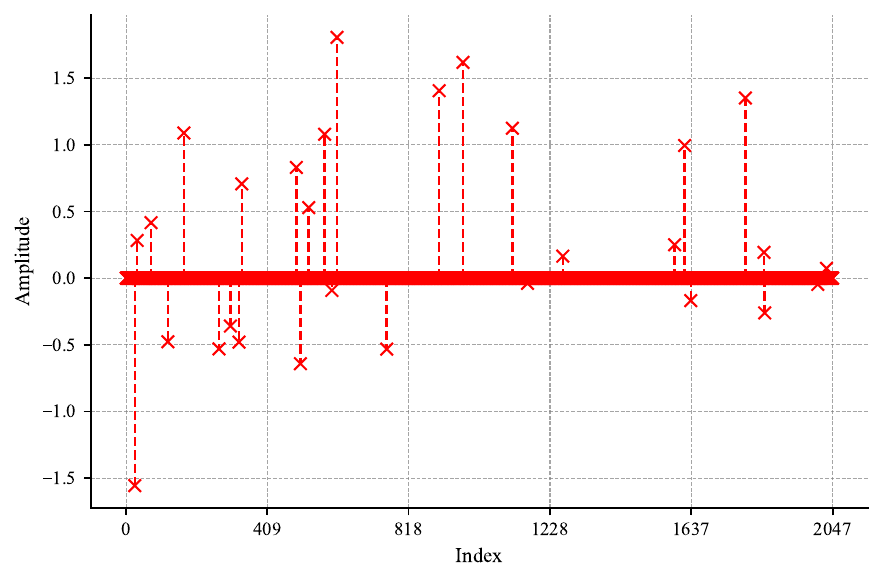}
		\caption{}
		\label{fig:x_recovered}
	\end{subfigure}
	\caption{Signal recovery visualization using Watchdog-Max with $n=2048$, $N=512$, $K=32$, and SNR = 20 dB. (a) Original signal; (b) Observation; (c) Reconstruction.}
	\label{fig:signal_recovery_all}
\end{figure}

It is well known that the objective function $f$ in \cref{eq:lr_v0} is smooth and strongly convex, owing to the presence of both the smooth logistic loss and the $\ell_2$ regularization term.
Therefore, solving \cref{eq:lr_v0} is equivalent to finding the root of the following system of nonlinear monotone equations:
$$
F(\boldsymbol{x})=\nabla f(\boldsymbol{x})=\frac{1}{N} \sum_{i=1}^N \frac{-b_i \exp \left(-b_i \boldsymbol{a}_i^T \boldsymbol{x}\right)}{1+\exp \left(-b_i \boldsymbol{a}_i^T \boldsymbol{x}\right)} \boldsymbol{a}_i+\tau \boldsymbol{x}=\boldsymbol{0}.
$$
Let $\boldsymbol{A} = \left[\boldsymbol{a}_1,\dots,\boldsymbol{a}_N\right]$ and define $\boldsymbol{H} = \boldsymbol{A}\boldsymbol{A}^T$. Through standard analysis, it can be verified that the mapping $F$ satisfies both Assumptions~\ref{assumption:Lipschitz} and \ref{assumption:comp_lip}. Specifically, the global Lipschitz constant of $F$ is given by
\begin{equation}
	L = \frac{1}{4N} \lambda_1\left(\boldsymbol{H}\right) + \tau,
	\label{eq:lr_L}
\end{equation}
where $\lambda_1(\boldsymbol{H})$ denotes the largest eigenvalue of $\boldsymbol{H}$.

In contrast, the componentwise Lipschitz constants take the form
\begin{equation}
	l_i = \frac{1}{4N} h_{ii} + \tau,\quad i = 1,\dots, n,
	\label{eq:lr_li}
\end{equation}
where $h_{ii}$ is the $(i,i)$-th diagonal entry of $\boldsymbol{H}$.

\begin{remark}
	Comparing \cref{eq:lr_L} and \cref{eq:lr_li}, computing the constant $L$ requires estimating the dominant eigenvalue of $\boldsymbol{H}$, which is computationally expensive when $n$ is large. In contrast, computing each $l_i$ only needs a diagonal entry of $\boldsymbol{H}$, rendering its computation considerably more efficient.
\end{remark}

Our experiments use three real-world datasets from the LIBSVM repository\footnote{\url{https://www.csie.ntu.edu.tw/~cjlin/libsvmtools/datasets/}} \cite{chang2011libsvm}: colon-cancer ($N = 62$, $n = 2{,}000$), duke breast-cancer ($N = 44$, $n = 7{,}129$), and leukemia ($N = 38$, $n = 7{,}129$).

\Cref{fig:rankselection} illustrates the slow decay of dominant component magnitudes in the Watchdog-Max method on the colon-cancer dataset. Throughout the iterations, the component selected at index $i_k$ consistently ranks within the top 2\%, with only two window resets triggered. In the final window, it persistently coincides with the top-1 component. This observation indicates that performing full evaluations at every extragradient step is unnecessary for identifying the dominant component.

\cref{tab:lr_sota} demonstrates that the Mini-EG variants outperform the standard EG method, with Watchdog-Max achieving a $9.25\times$ speedup on duke breast-cancer data.
Notably, Mini-EG methods reduce both the per-iteration cost and the total number of iterations compared with EG. Among them, G-Mini-EG attains the lowest iteration count, whereas Watchdog-Max requires slightly more iterations but updates only two components per extragradient step, which leads to substantially lower \texttt{NF} and \texttt{Tcpu}. These results indicate that full mapping evaluations at every extragradient step are not necessary, and that updating only a single coordinate per extragradient step already provides significant computational gains.

\begin{figure}[t]
	\centering
	\begin{subfigure}[b]{0.32\linewidth}
		\centering
		\includegraphics[width=\textwidth]{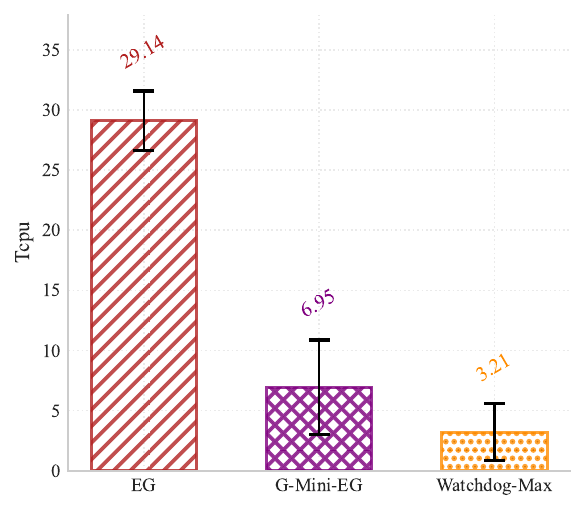}
		\caption{}
		\label{fig:Time_comparison}
	\end{subfigure}
	\hfill
	\begin{subfigure}[b]{0.32\linewidth}
		\centering
		\includegraphics[width=\textwidth]{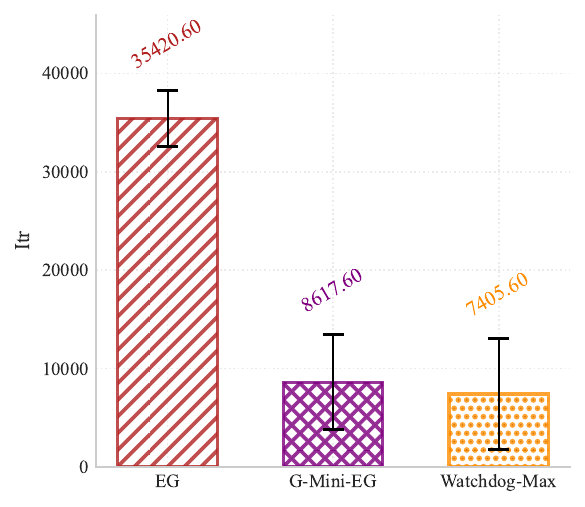}
		\caption{}
		\label{fig:Iter_comparison}
	\end{subfigure}
	\hfill
	\begin{subfigure}[b]{0.32\linewidth}
		\centering
		\includegraphics[width=\textwidth]{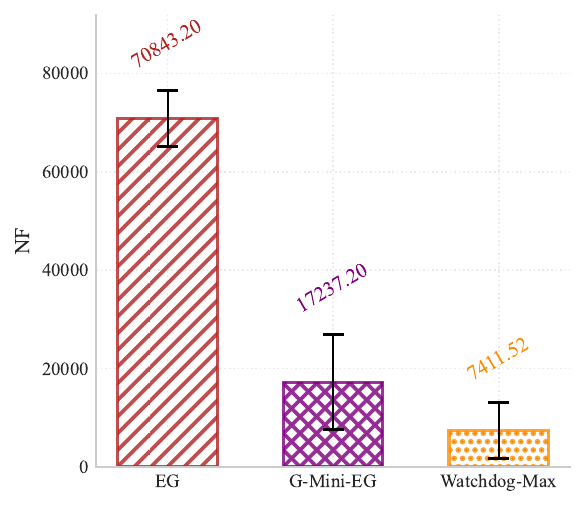}
		\caption{}
		\label{fig:NF_comparison}
	\end{subfigure}
	\caption{Performance comparison of different methods is shown, in terms of (a) \texttt{Tcpu}, (b) \texttt{Itr}, and (c) \texttt{NF}.}
	\label{fig:cs_bar_plot}
\end{figure}

\subsection{Compressed Sensing} \label{sec:cs}

We consider the LASSO problem:
\begin{equation}
	\min_{\boldsymbol{x} \in \mathbb{R}^n} f(\boldsymbol{x}) = \frac{1}{2}\|\boldsymbol{A} \boldsymbol{x} - \boldsymbol{b}\|_2^2 + \tau \|\boldsymbol{x}\|_1,
	\label{eq:lasso}
\end{equation}
where $\boldsymbol{A} \in \mathbb{R}^{N \times n}$ ($N \leq n$) is the sensing matrix, $\boldsymbol{b} \in \mathbb{R}^N$ is the observation vector, and $\tau > 0$ is a regularization parameter promoting sparsity.
Following \cite{kim2008interior}, we set $\tau = 0.1\|\boldsymbol{A}^T\boldsymbol{b}\|_{\infty}$.

\begin{table}[t]
	\centering
	\caption{Average performance comparison of Mini-EG and recent EG-type methods in compressed sensing under different configurations. The best results are highlighted in \textbf{bold}, and the \colorbox{gray!10}{\textit{speedup}} of Watchdog-Max over EG is also reported.}
	\label{tab:cs_sota}
	\resizebox{1\linewidth}{!}
	{
		\begin{small}
			\begin{sc}
				\begin{tabular}{lccc}
					\toprule
					\textbf{Method} & $K = 8$ & $K = 16$ & $K = 32$ \\
					\cmidrule(lr){2-2} \cmidrule(lr){3-3} \cmidrule(lr){4-4}
					& \scriptsize \textbf{TCPU/Itr/NF/$\boldsymbol{\|F^*\|}$} & \scriptsize \textbf{TCPU/Itr/NF/$\boldsymbol{\|F^*\|}$} & \scriptsize \textbf{TCPU/Itr/NF/$\boldsymbol{\|F^*\|}$} \\
					\midrule
					\multicolumn{4}{c}{\textbf{$N = 512$}} \\
					\cmidrule(lr){2-4}
					\textbf{EG} & \scriptsize 6.24/24830.80/49661.60/1.00\text{e-}08 & \scriptsize 6.94/28310.80/56621.60/1.00\text{e-}08 & \scriptsize 8.32/35517.50/71035.00/1.00\text{e-}08 \\
					\textbf{PEG} & \scriptsize 4.03/24851.10/24851.10/1.00\text{e-}08 & \scriptsize 4.23/28331.80/28331.80/1.00\text{e-}08 & \scriptsize 5.07/35538.60/35538.60/1.00\text{e-}08 \\
					\textbf{EAG-C} & \scriptsize 11.44/50000.00/100000.00/4.96\text{e-}04 & \scriptsize 11.51/50000.00/100000.00/6.64\text{e-}04 & \scriptsize 11.41/50000.00/100000.00/9.27\text{e-}04 \\
					\textbf{FEG} & \scriptsize 11.55/50000.00/100000.00/1.70\text{e-}01 & \scriptsize 11.75/50000.00/100000.00/2.29\text{e-}01 & \scriptsize 11.64/50000.00/100000.00/3.21\text{e-}01 \\
					\textbf{G-Mini-EG} & \scriptsize 0.92/4707.50/9415.00/1.00\text{e-}08 & \scriptsize 1.55/\textbf{7007.80}/14015.60/1.00\text{e-}08 & \scriptsize 1.45/7508.20/15016.40/1.00\text{e-}08 \\
					\textbf{Watchdog-Max} & \scriptsize \textbf{0.46}/\textbf{4020.60}/\textbf{4024.36}/1.00\text{e-}08 & \scriptsize \textbf{0.83}/7595.90/\textbf{7601.01}/1.00\text{e-}08 & \scriptsize \textbf{0.73}/\textbf{6347.90}/\textbf{6352.60}/1.00\text{e-}08 \\
					\cmidrule(lr){2-2} \cmidrule(lr){3-3} \cmidrule(lr){4-4}
					\rowcolor{gray!10} \textit{Speedup} & \scriptsize \textit{13.51$\times$} & \scriptsize \textit{8.35$\times$} & \scriptsize \textit{11.36$\times$} \\
					\midrule
					\multicolumn{4}{c}{\textbf{$N = 768$}} \\
					\cmidrule(lr){2-4}
					\textbf{EG} & \scriptsize 5.09/18946.60/37893.20/1.00\text{e-}08 & \scriptsize 5.44/20763.70/41527.40/1.00\text{e-}08 & \scriptsize 5.85/23137.00/46274.00/1.00\text{e-}08 \\
					\textbf{PEG} & \scriptsize 3.36/18966.90/18966.90/9.99\text{e-}09 & \scriptsize 3.54/20784.60/20784.60/1.00\text{e-}08 & \scriptsize 3.81/23158.10/23158.10/1.00\text{e-}08 \\
					\textbf{EAG-C} & \scriptsize 11.32/50000.00/100000.00/4.29\text{e-}04 & \scriptsize 11.29/50000.00/100000.00/7.33\text{e-}04 & \scriptsize 11.20/50000.00/100000.00/9.20\text{e-}04 \\
					\textbf{FEG} & \scriptsize 11.49/50000.00/100000.00/1.13\text{e-}01 & \scriptsize 11.55/50000.00/100000.00/1.94\text{e-}01 & \scriptsize 11.40/50000.00/100000.00/2.44\text{e-}01 \\
					\textbf{G-Mini-EG} & \scriptsize 0.70/\textbf{3626.50}/7253.00/1.00\text{e-}08 & \scriptsize 0.74/3957.60/7915.20/1.00\text{e-}08 & \scriptsize 1.34/6860.20/13720.40/1.00\text{e-}08 \\
					\textbf{Watchdog-Max} & \scriptsize \textbf{0.62}/5134.80/\textbf{5138.71}/1.00\text{e-}08 & \scriptsize \textbf{0.45}/\textbf{3486.90}/\textbf{3490.00}/1.00\text{e-}08 & \scriptsize \textbf{0.70}/\textbf{5939.70}/\textbf{5943.80}/1.00\text{e-}08 \\
					\cmidrule(lr){2-2} \cmidrule(lr){3-3} \cmidrule(lr){4-4}
					\rowcolor{gray!10} \textit{Speedup} & \scriptsize \textit{8.24$\times$} & \scriptsize \textit{12.19$\times$} & \scriptsize \textit{8.31$\times$} \\
					\midrule
					\multicolumn{4}{c}{\textbf{$N = 1024$}} \\
					\cmidrule(lr){2-4}
					\textbf{EG} & \scriptsize 4.52/15660.20/31320.40/1.00\text{e-}08 & \scriptsize 4.68/16771.10/33542.20/1.00\text{e-}08 & \scriptsize 5.08/18511.30/37022.60/1.00\text{e-}08 \\
					\textbf{PEG} & \scriptsize 3.12/15680.60/15680.60/9.99\text{e-}09 & \scriptsize 3.10/16791.60/16791.60/1.00\text{e-}08 & \scriptsize 3.36/18532.40/18532.40/9.99\text{e-}09 \\
					\textbf{EAG-C} & \scriptsize 11.25/50000.00/100000.00/5.06\text{e-}04 & \scriptsize 11.47/50000.00/100000.00/6.72\text{e-}04 & \scriptsize 11.69/50000.00/100000.00/9.65\text{e-}04 \\
					\textbf{FEG} & \scriptsize 11.46/50000.00/100000.00/1.13\text{e-}01 & \scriptsize 11.67/50000.00/100000.00/1.49\text{e-}01 & \scriptsize 11.77/50000.00/100000.00/2.14\text{e-}01 \\
					\textbf{G-Mini-EG} & \scriptsize 1.03/\textbf{5481.90}/10963.80/1.00\text{e-}08 & \scriptsize 1.12/\textbf{5556.30}/11112.60/1.00\text{e-}08 & \scriptsize 1.31/7032.70/14065.40/1.00\text{e-}08 \\
					\textbf{Watchdog-Max} & \scriptsize \textbf{0.63}/5634.40/\textbf{5638.75}/1.00\text{e-}08 & \scriptsize \textbf{0.73}/6082.20/\textbf{6086.77}/1.00\text{e-}08 & \scriptsize \textbf{0.77}/\textbf{6428.20}/\textbf{6432.74}/1.00\text{e-}08 \\
					\cmidrule(lr){2-2} \cmidrule(lr){3-3} \cmidrule(lr){4-4}
					\rowcolor{gray!10} \textit{Speedup} & \scriptsize \textit{7.14$\times$} & \scriptsize \textit{6.40$\times$} & \scriptsize \textit{6.58$\times$} \\
					\bottomrule
				\end{tabular}
			\end{sc}
		\end{small}
	}
\end{table}

By decomposing $\boldsymbol{x}$ into its positive and negative parts, i.e., $\boldsymbol{x} = \boldsymbol{u} - \boldsymbol{v}$, problem \cref{eq:lasso} can be reformulated as a system of nonlinear monotone equations \cite{figueiredo2008gradient}:
\begin{equation}
	F(\boldsymbol{z}) = \min \{\boldsymbol{z}, \boldsymbol{H} \boldsymbol{z}+\boldsymbol{c}\}=\boldsymbol{0},
	\label{eq:F_cs}
\end{equation}
where 
$$
\boldsymbol{z}=\left[\begin{array}{l}\boldsymbol{u} \\ \boldsymbol{v}\end{array}\right], \quad \boldsymbol{H}=\left[\begin{array}{cc}
	\boldsymbol{A}^T \boldsymbol{A} & -\boldsymbol{A}^T \boldsymbol{A} \\
	-\boldsymbol{A}^T \boldsymbol{A} & \boldsymbol{A}^T \boldsymbol{A}
\end{array}\right],
$$
and 
$$
\boldsymbol{c}=\tau \boldsymbol{1}_{2 n}+\left[\begin{array}{c}
	-\boldsymbol{A}^T\boldsymbol{b} \\
	\boldsymbol{A}^T\boldsymbol{b}
\end{array}\right],
$$
with $\boldsymbol{1}_{2n}$ denoting the all-one vector in $\mathbb{R}^{2n}$.
The operator $\min\left\{\cdot\right\}$ is applied componentwise.

Following \cite[Lemma 3]{pang1986inexact}, it can be shown that the mapping $F$ in \cref{eq:F_cs} has a global Lipschitz constant
\begin{equation}
	L = \sqrt{n} \max\left\{\lambda_1(\boldsymbol{H}),1\right\},
	\label{eq:cs_L}
\end{equation}
and componentwise Lipschitz constants
\begin{equation}
	l_i = \max\left\{h_{ii}, 1\right\},\quad i = 1,\dots, 2n.
	\label{eq:cs_li}
\end{equation}

\begin{remark}
	From \cref{eq:cs_L} and \cref{eq:cs_li}, we observe that $l_i$ is easier to compute and typically smaller than $L$. Furthermore, leveraging the structure of $\boldsymbol{H}$, it suffices to compute only the first $n$ diagonal entries $h_{ii}$, i.e., the diagonal elements of $\boldsymbol{A}^T\boldsymbol{A}$.
\end{remark}

We generate synthetic datasets by randomly sampling sparse signals with sparsity level $K$. The sensing matrix $\boldsymbol{A}$ is drawn from a standard Gaussian distribution, and the observation signal-to-noise ratio (SNR) is set to 20 dB.

Under the setting $n=2048$, $N=512$, and $K=32$ with a zero initial point, \cref{fig:signal_recovery_all} illustrates the signal recovery performance of Watchdog-Max. \cref{fig:cs_bar_plot} compares methods via bar plots (mean$\pm$std) for \texttt{Tcpu}, \texttt{Itr}, and \texttt{NF}. Watchdog-Max outperforms all baselines, achieving over $9\times$ and $2\times$ speedup in \texttt{Tcpu} versus EG and G-Mini-EG, respectively. Notably, it requires fewer iterations than G-Mini-EG, indicating that the greedy index-selection scheme in G-Mini-EG admits further improvement.

\begin{figure}[t]
	\centering
	\begin{subfigure}[b]{0.32\linewidth}
		\centering
		\includegraphics[width=\textwidth]{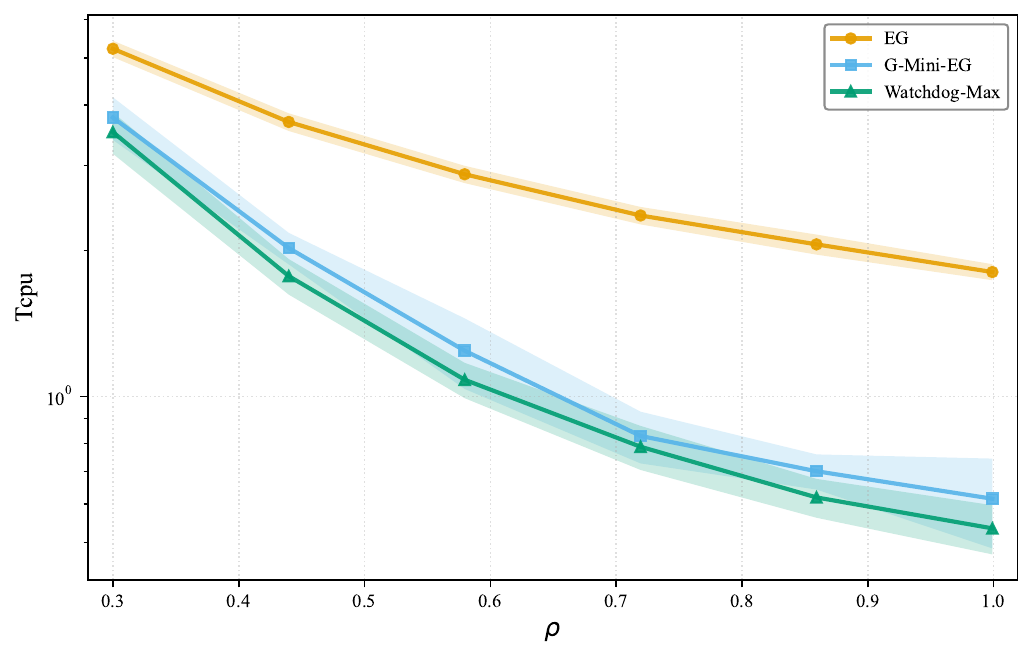}
		\caption{}
		\label{fig:cancer_Time_vs_rho}
	\end{subfigure}
	\hfill
	\begin{subfigure}[b]{0.32\linewidth}
		\centering
		\includegraphics[width=\textwidth]{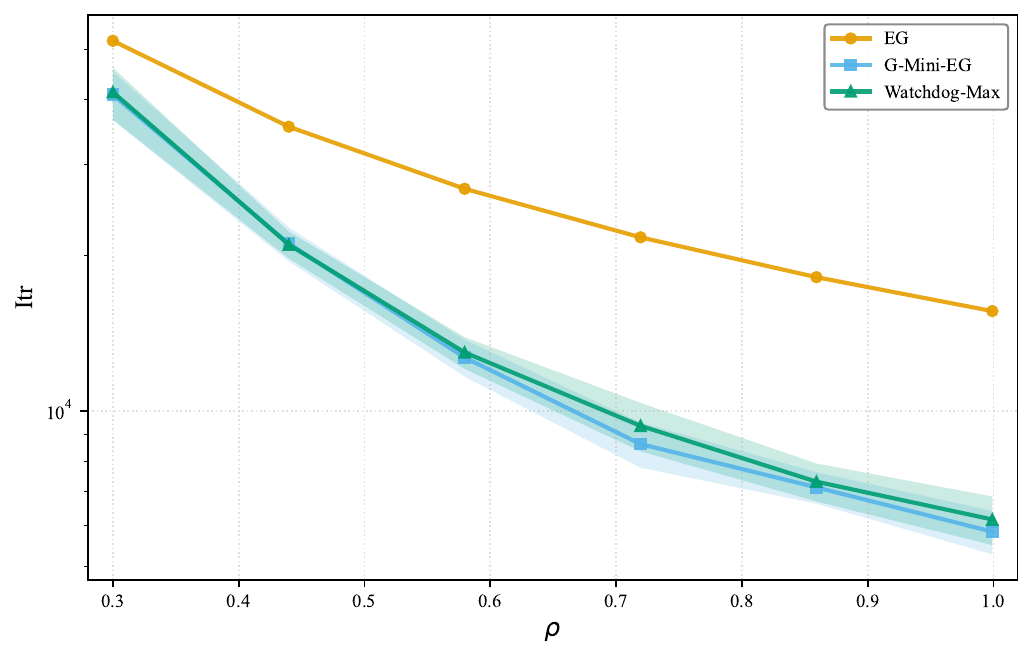}
		\caption{}
		\label{fig:cancer_Iter_vs_rho}
	\end{subfigure}
	\hfill
	\begin{subfigure}[b]{0.32\linewidth}
		\centering
		\includegraphics[width=\textwidth]{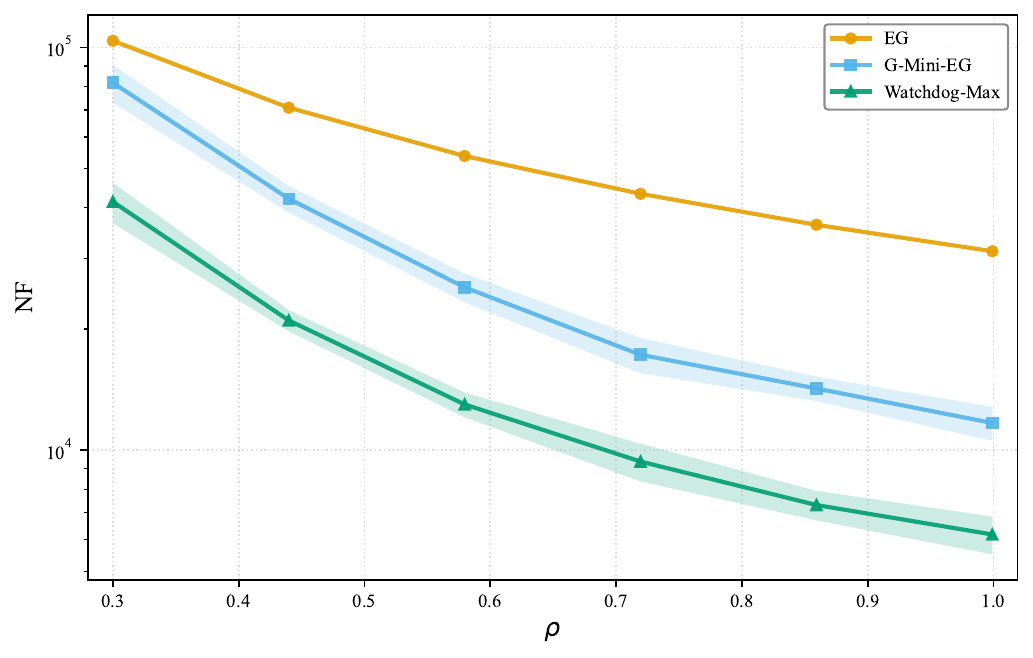}
		\caption{}
		\label{fig:cancer_NF_vs_rho}
	\end{subfigure}
	\begin{subfigure}[b]{0.32\linewidth}
		\centering
		\includegraphics[width=\textwidth]{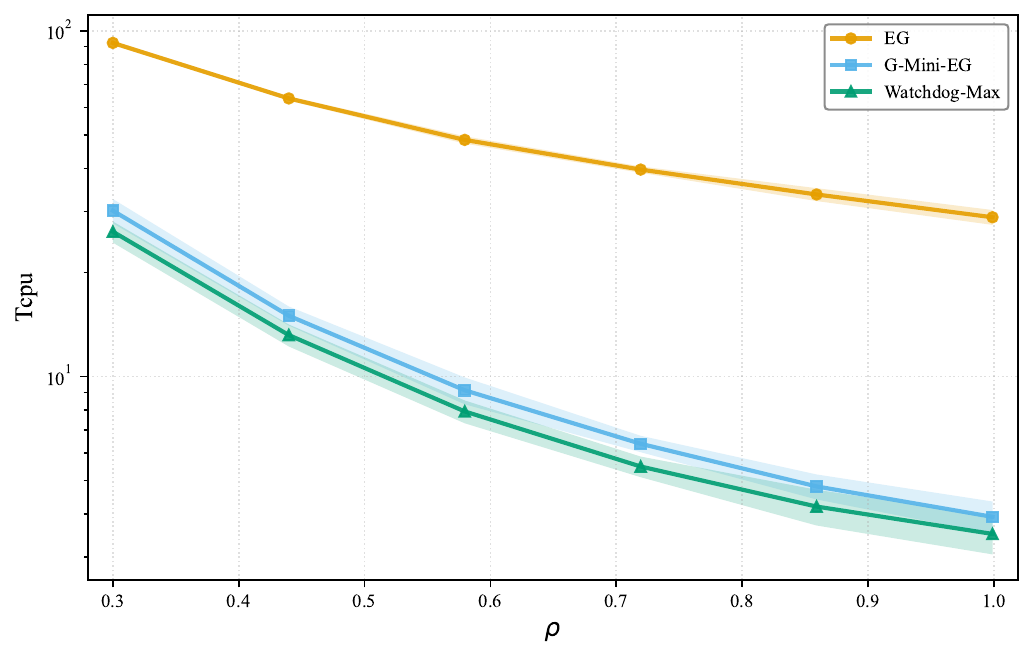}
		\caption{}
		\label{fig:duke_Time_vs_rho}
	\end{subfigure}
	\hfill
	\begin{subfigure}[b]{0.32\linewidth}
		\centering
		\includegraphics[width=\textwidth]{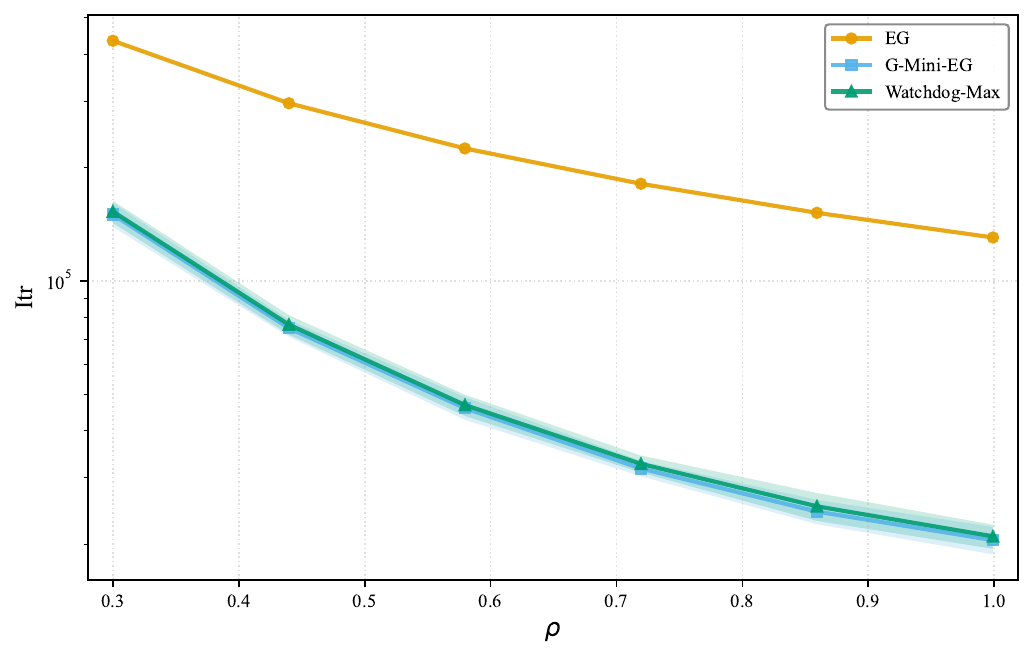}
		\caption{}
		\label{fig:duke_Iter_vs_rho}
	\end{subfigure}
	\hfill
	\begin{subfigure}[b]{0.32\linewidth}
		\centering
		\includegraphics[width=\textwidth]{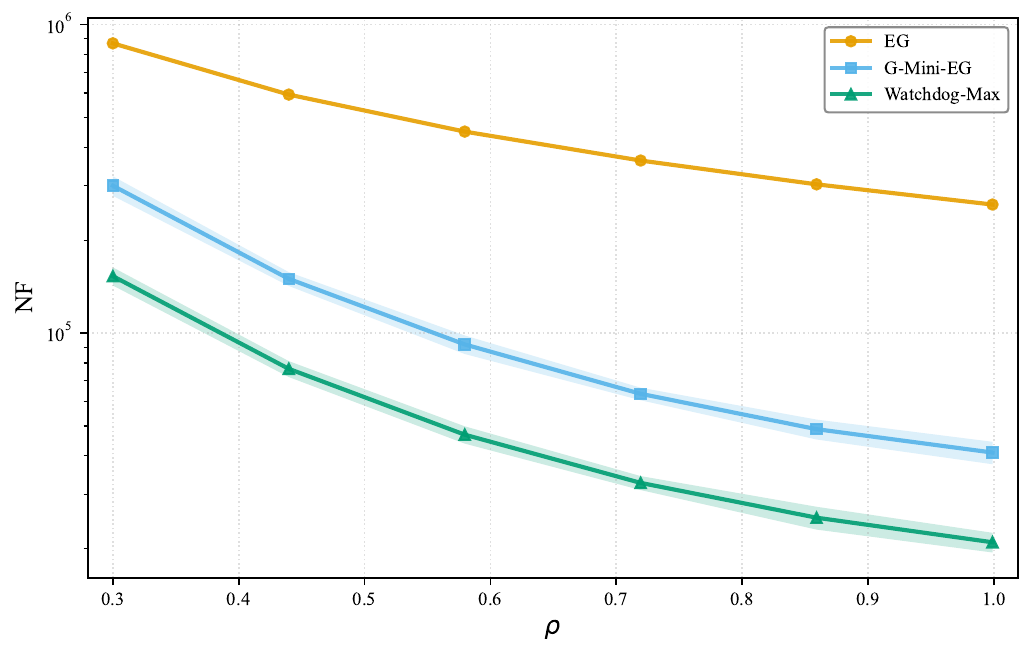}
		\caption{}
		\label{fig:duke_NF_vs_rho}
	\end{subfigure}
	\begin{subfigure}[b]{0.32\linewidth}
		\centering
		\includegraphics[width=\textwidth]{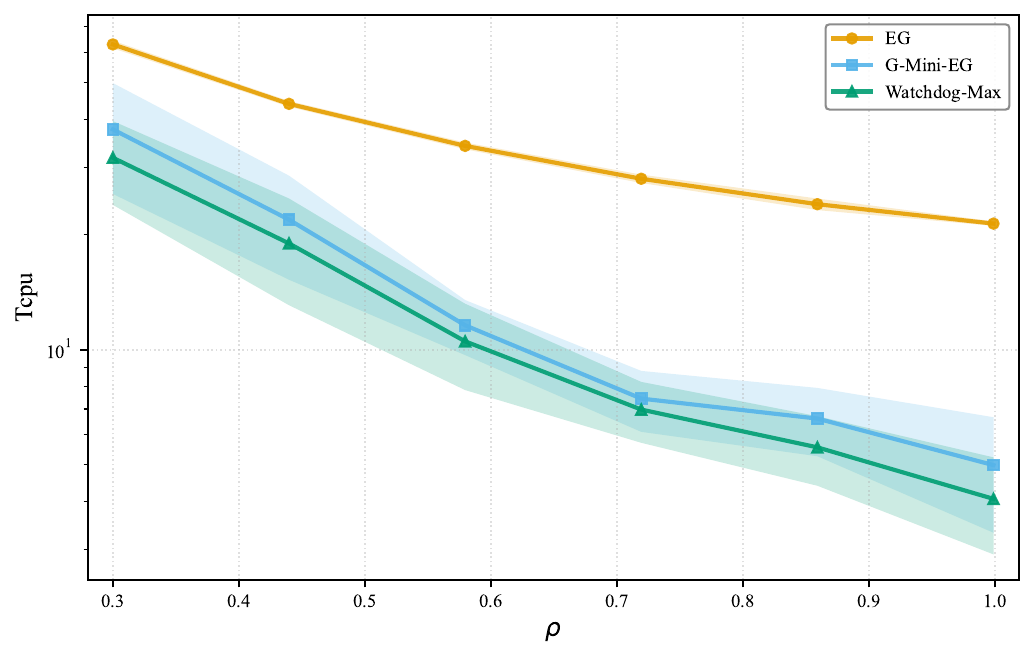}
		\caption{}
		\label{fig:leu_Time_vs_rho}
	\end{subfigure}
	\hfill
	\begin{subfigure}[b]{0.32\linewidth}
		\centering
		\includegraphics[width=\textwidth]{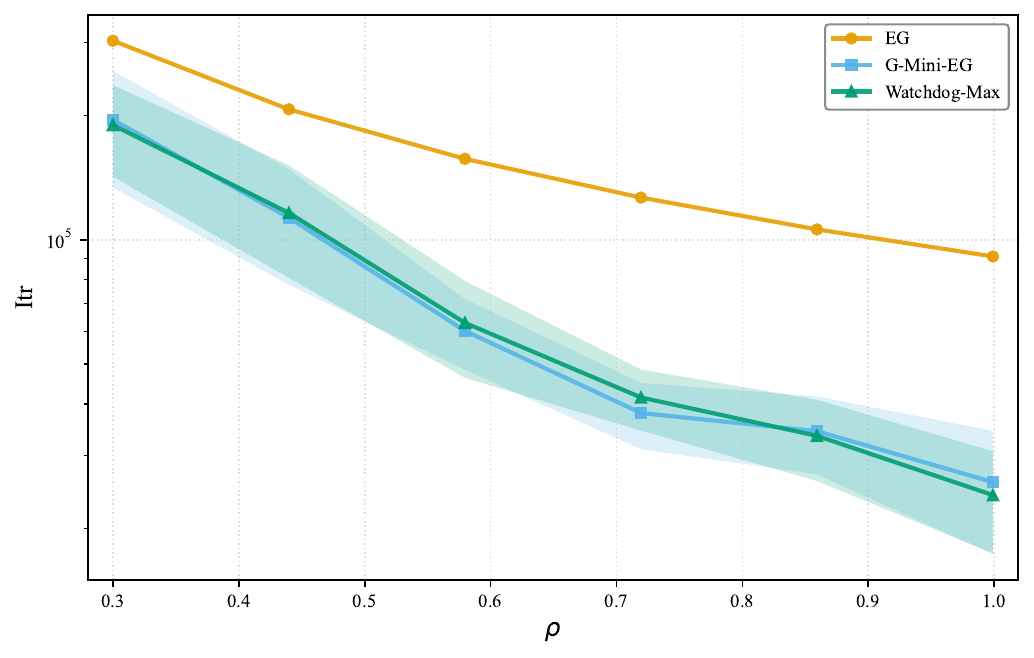}
		\caption{}
		\label{fig:leu_Iter_vs_rho}
	\end{subfigure}
	\hfill
	\begin{subfigure}[b]{0.32\linewidth}
		\centering
		\includegraphics[width=\textwidth]{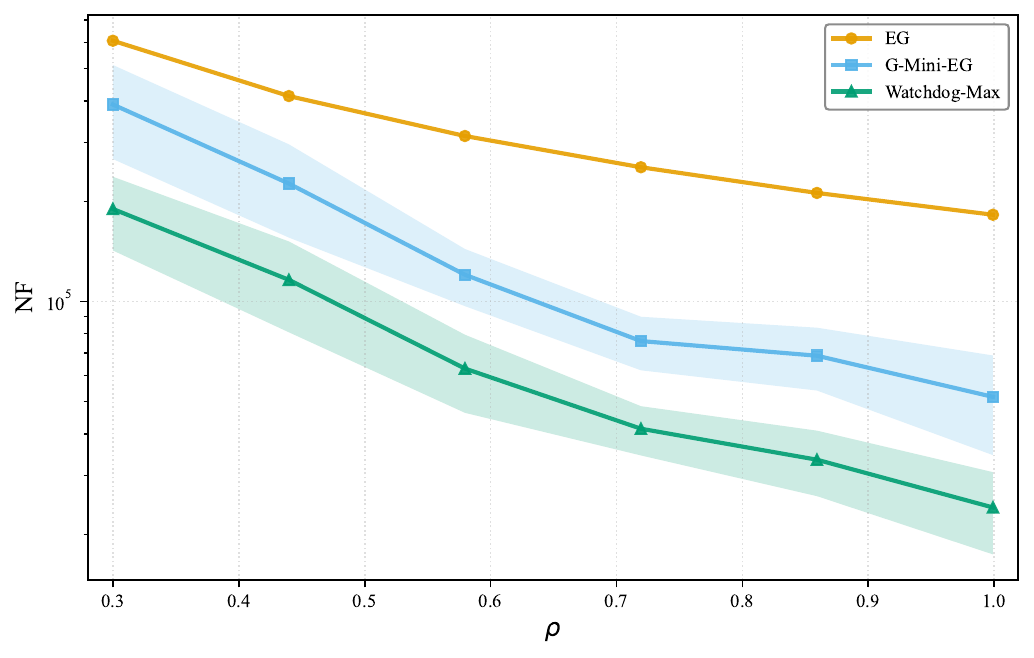}
		\caption{}
		\label{fig:leu_NF_vs_rho}
	\end{subfigure}
	\caption{Sensitivity of $\rho$ for EG and Mini-EG on regularized decentralized logistic regression, measured by \texttt{Tcpu}, \texttt{Itr}, and \texttt{NF}. Curves show average results, and shaded areas indicate one standard deviation. Datasets: colon-cancer (a-c), duke breast-cancer (d-f), leukemia (g-i).}
	\label{fig:LR_rho}
\end{figure}

\section{Conclusion and Future Work} \label{sec:conclusion}

This work introduces a family of Mini-EG methods for solving monotone nonlinear equations. By updating only the dominant coordinate and employing componentwise Lipschitz constants, the proposed G-Mini-EG method achieves more efficient iterations together with stepsizes that are substantially easier to compute. To further reduce computational overhead, the Watchdog-Max strategy incorporates randomized sampling so that only two coordinates are tracked at each extragradient step, while still maintaining both convergence guarantees and computational efficiency. Theoretical results are established for all proposed variants. Notably, in a range of commonly encountered problem settings, the Greedy Mini-EG method admits sharper convergence bounds than those available for the standard EG method. Empirical evaluations on decentralized logistic regression and compressed sensing further demonstrate runtime improvements of more than ${13\times}$ over the classical EG algorithm.

\textbf{Future work.}
These findings significantly advance the applicability of EG-type methods to problems in optimization and scientific computing. Future work will investigate Mini-Batch extensions by selecting a subset of coordinates at each step, and explore integrating the mini-update strategy into both stages of the algorithm, paving the way for a potential Double-Mini-EG framework. Additionally, we plan to analyze the last-iterate convergence rate to address a gap in the convergence theory of Mini-EG methods.

\appendix

\section{Additional Comparisons with Recent Baselines} \label{appendix:cs_sota}

In this section, we present additional comparisons between the proposed Mini-EG algorithms and several recent baseline methods, including PEG \cite{popov1980modification}, EAG \cite{yoon2021accelerated}, and FEG \cite{lee2021fast}, in the context of compressed sensing.
We vary $N \in \{512, 768, 1024\}$ and $K \in \{8, 16, 32\}$ under $n = 2048$ and SNR = 20~dB. The maximum number of iterations is set to $50{,}000$. The same stepsize is adopted for PEG, EG, and Mini-EG. For EAG with constant stepsize (EAG-C), the stepsize is tuned to the largest value within its convergence range. All parameters of FEG follow the default settings recommended in \cite{lee2021fast}.

\Cref{tab:cs_sota} reports the averaged results over all configurations. As observed, the Mini-EG variants consistently outperform existing EG-type methods across all performance metrics. In particular, Watchdog-Max achieves the best results in terms of \texttt{Tcpu} in every case, with a maximum speedup exceeding $13\times$ relative to EG. It also demonstrates a clear improvement over G-Mini-EG, which is mainly due to the fact that each extragradient step in Watchdog-Max updates only two coordinates, thereby considerably reducing the \texttt{NF} while maintaining comparable iteration counts. Moreover, both EAG-C and FEG fail to reach the prescribed accuracy within the iteration limit, converging only to low-accuracy solutions.

\section{Sensitivity Analysis of the Parameter $\rho$} \label{appendix:sensitivity_rho}

In this section, we analyze the sensitivity of the parameter $\rho$ and its impact on the performance of the EG and Mini-EG methods. As shown in \cref{fig:LR_rho}, for different values of $\rho \in (0,1)$, both G-Mini-EG and Watchdog-Max consistently outperform EG in terms of \texttt{Tcpu}, \texttt{Itr}, and \texttt{NF}. Moreover, as $\rho$ increases, the acceleration achieved by Mini-EG relative to EG becomes more pronounced. These observations suggest that selecting $\rho$ close to 1 can be beneficial for achieving faster convergence in certain scenarios.\footnote{Although, in theory, setting $\rho$ near 1 does not necessarily yield the tightest convergence rate bound, in practice faster convergence may arise because $l_{i_k}$ remains an overestimate.}

\bibliographystyle{siamplain}
\bibliography{references}

@article{nesterov2012efficiency,
  title={Efficiency of coordinate descent methods on huge-scale optimization problems},
  author={Nesterov, Yu},
  journal={SIAM J. Optim.},
  volume={22},
  number={2},
  pages={341--362},
  year={2012}
}

@article{korpelevich1976extragradient,
  title={The extragradient method for finding saddle points and other problems},
  author={Korpelevich, Galina M},
  journal={Matecon},
  volume={12},
  pages={747--756},
  year={1976}
}

@article{jian2022family,
  title={A family of inertial derivative-free projection methods for constrained nonlinear pseudo-monotone equations with applications},
  author={Jian, Jinbao and Yin, Jianghua and Tang, Chunming and Han, Daolan},
  journal={Comput. Appl. Math.},
  volume={41},
  number={7},
  pages={309},
  year={2022}
}

@inproceedings{goodfellow2014generative,
  title={Generative adversarial nets},
  author={Goodfellow, Ian J and Pouget-Abadie, Jean and Mirza, Mehdi and Xu, Bing and Warde-Farley, David and Ozair, Sherjil and Courville, Aaron and Bengio, Yoshua},
  booktitle = {Proceedings of the 28th International Conference on Neural Information Processing Systems},
  publisher = {Curran Associates, Inc.},
  volume={27},
  year={2014}
}

@article{figueiredo2008gradient,
  title={Gradient projection for sparse reconstruction: Application to compressed sensing and other inverse problems},
  author={Figueiredo, M{\'a}rio AT and Nowak, Robert D and Wright, Stephen J},
  journal={IEEE J. Sel. Top. Signal Process.},
  volume={1},
  number={4},
  pages={586--597},
  year={2008}
}

@inproceedings{wen2014robust,
  title={Robust learning under uncertain test distributions: Relating covariate shift to model misspecification},
  author={Wen, Junfeng and Yu, Chun-Nam and Greiner, Russell},
  booktitle={Proceedings of the 31st International Conference on Machine Learning},
  pages={631--639},
  year={2014},
  organization={PMLR}
}

@article{wang2020generalizing,
  title={Generalizing from a few examples: A survey on few-shot learning},
  author={Wang, Yaqing and Yao, Quanming and Kwok, James T and Ni, Lionel M},
  journal={ACM Comput. Surv.},
  volume={53},
  number={3},
  pages={1--34},
  year={2020}
}

@inproceedings{diakonikolas2021efficient,
  title={Efficient methods for structured nonconvex-nonconcave min-max optimization},
  author={Diakonikolas, Jelena and Daskalakis, Constantinos and Jordan, Michael I},
  booktitle={Proceedings of The 24th International Conference on Artificial Intelligence and Statistics},
  pages={2746--2754},
  year={2021},
  organization={PMLR}
}

@inproceedings{yoon2021accelerated,
  title={Accelerated algorithms for smooth convex-concave minimax problems with {$O(1/k^2)$} rate on squared gradient norm},
  author={Yoon, TaeHo and Ryu, Ernest K},
  booktitle={Proceedings of the 38th International Conference on Machine Learning},
  pages={12098--12109},
  year={2021},
  organization={PMLR}
}

@inproceedings{lee2021fast,
  title={Fast extra gradient methods for smooth structured nonconvex-nonconcave minimax problems},
  author={Lee, Sucheol and Kim, Donghwan},
  booktitle={Proceedings of the 35th International Conference on Neural Information Processing Systems},
  volume={34},
  pages={22588--22600},
  year={2021}
}

@article{xiao2011non,
  title={Non-smooth equations based method for $\ell_1$-norm problems with applications to compressed sensing},
  author={Xiao, Yunhai and Wang, Qiuyu and Hu, Qingjie},
  journal={Nonlinear Anal. Theor.},
  volume={74},
  number={11},
  pages={3570--3577},
  year={2011}
}

@article{popov1980modification,
  title={A modification of the Arrow-Hurwicz method for search of saddle points},
  author={Popov, Leonid Denisovich},
  journal={Math. Notes},
  volume={28},
  number={5},
  pages={845--848},
  year={1980}
}

@article{alon1999broad,
  title={Broad patterns of gene expression revealed by clustering analysis of tumor and normal colon tissues probed by oligonucleotide arrays},
  author={Alon, Uri and Barkai, Naama and Notterman, Daniel A and Gish, Kurt and Ybarra, Suzanne and Mack, Daniel and Levine, Arnold J},
  journal={Proc. Natl. Acad. Sci. USA},
  volume={96},
  number={12},
  pages={6745--6750},
  year={1999}
}

@article{golub1999molecular,
  author = {T. R. Golub  and D. K. Slonim  and P. Tamayo  and C. Huard  and M. Gaasenbeek  and J. P. Mesirov  and H. Coller  and M. L. Loh  and J. R. Downing  and M. A. Caligiuri  and C. D. Bloomfield  and E. S. Lander },
  title = {Molecular classification of cancer: class discovery and class prediction by gene expression monitoring},
  journal={Science},
  volume={286},
  number={5439},
  pages={531--537},
  year={1999}
}

@article{chang2011libsvm,
  title={LIBSVM: A library for support vector machines},
  author={Chang, Chih-Chung and Lin, Chih-Jen},
  journal={ACM Trans. Intell. Syst. Technol.},
  volume={2},
  number={3},
  pages={1--27},
  year={2011}
}

@article{kim2008interior,
  title={An interior-point method for large-scale $\ell_1$-regularized least squares},
  author={Kim, Seung-Jean and Koh, Kwangmoo and Lustig, Michael and Boyd, Stephen and Gorinevsky, Dimitry},
  journal={IEEE J. Sel. Top. Signal Process.},
  volume={1},
  number={4},
  pages={606--617},
  year={2008}
}

@article{pang1986inexact,
  title={Inexact Newton methods for the nonlinear complementarity problem},
  author={Pang, Jong-Shi},
  journal={Math. Program.},
  volume={36},
  number={1},
  pages={54--71},
  year={1986}
}

@book{bauschke2011convex,
  title={Convex Analysis and Monotone Operator Theory in Hilbert Spaces},
  author={Bauschke, Heinz H and Combettes, Patrick L},
  year={2011},
  publisher={Springer}
}

@article{tibshirani1996regression,
  title={Regression shrinkage and selection via the lasso},
  author={Tibshirani, Robert},
  journal={J. R. Stat. Soc. Ser. B},
  volume={58},
  number={1},
  pages={267--288},
  year={1996}
}

@article{wright2015coordinate,
  title={Coordinate descent algorithms},
  author={Wright, Stephen J},
  journal={Math. Program.},
  volume={151},
  number={1},
  pages={3--34},
  year={2015}
}

@article{bandeira2016sharp,
  title={Sharp nonasymptotic bounds on the norm of random matrices with independent entries},
  author={Bandeira, Afonso S. and van Handel, Ramon},
  journal={Ann. Probab.},
  volume={44},
  number={4},
  pages={2479--2506},
  year={2016}
}
\end{document}